\newcommand{\vk}{\varkappa}
\newcommand{\BR}{\mathbb{R}}
\newcommand{\SL}{\sum\limits}
\newcommand{\al}{\alpha}
\newcommand{\be}{\beta}
\newcommand{\ga}{\gamma}
\newcommand{\de}{\delta}
\newcommand{\ME}{\mathbf E}
\newcommand{\CF}{\mathcal F}
\newcommand{\CG}{\mathcal G}
\newcommand{\MP}{\mathbf P}
\newcommand{\CA}{\mathcal A}
\newcommand{\CN}{\mathcal N}
\newcommand{\Oa}{\Omega}
\newcommand{\oa}{\omega}
\newcommand{\si}{\sigma}
\renewcommand{\phi}{\varphi}
\newcommand{\eps}{\varepsilon}
\newcommand{\Ra}{\Rightarrow}
\newcommand{\ol}{\overline}
\renewcommand{\comment}[1]{}
\newcommand{\md}{\mathrm{d}}
\begin{document}

\theoremstyle{plain}
\newtheorem{thm}{Theorem}[section]
\newtheorem*{thmnonumber}{Theorem}
\newtheorem{lemma}[thm]{Lemma}
\newtheorem{prop}[thm]{Proposition}
\newtheorem{cor}[thm]{Corollary}
\newtheorem{open}[thm]{Open Problem}

\theoremstyle{definition}
\newtheorem{defn}{Definition}
\newtheorem{asmp}{Assumption}
\newtheorem{notn}{Notation}
\newtheorem{prb}{Problem}

\theoremstyle{remark}
\newtheorem{rmk}{Remark}
\newtheorem{exm}{Example}
\newtheorem{clm}{Claim}

\author{Cameron Bruggeman and Andrey Sarantsev}

\title[Penalty Method]{Penalty Method for Reflected Diffusions\\ on The Half-Line} 

\address{Department of Mathematics, Columbia University}

\email{cam.bruggeman@gmail.com}

\address{Department of Statistics and Applied Probability, University of California, Santa Barbara}

\email{sarantsev@pstat.ucsb.edu}

\keywords{Stochastic differential equation, reflected diffusion, reflected Brownian motion, weak convergence, scale function, penalty method}

\subjclass[2010]{Primary 60J60, secondary 60J55, 60J65, 60H10}

\date{October 12, 2016. Version 16}

\begin{abstract}
Consider a reflected diffusion on the positive half-line. We approximate it by solutions of stochastic differential equations using the penalty method: We emulate the ``hard barrier'' of reflection by a ``soft barrier'' of a large drift coefficient, which compells the diffusion to return to the positive half-line. The main tool of the proof is convergence of scale functions. 
\end{abstract}

\maketitle

\section{Introduction} 

Let us informally introduce the concept of a reflected diffusion process on the positive half-line $\BR_+ := [0, \infty)$. Take measurable functions $g : \BR_+ \to \BR$ and $\si : \BR_+ \to \BR_+$. An $\BR_+$-valued stochastic process $Z = (Z(t), t \ge 0)$ is called a {\it reflected diffusion on $\BR_+$} with {\it drift coefficient} $g$ and {\it diffusion coefficient} $\si$ if:

\medskip

(i) as long as $Z(t) > 0$, this process behaves as a solution to the following SDE:
$$
\md Z(t) = g(Z(t))\md t + \si(Z(t))\md W(t),
$$
where $W = (W(t), t \ge 0)$ is a Brownian motion on the real line;

\medskip

(ii) at $z = 0$, it is reflected in the positive direction.

\medskip

The precise definition is given in Section 2. By construction, this process $Z$ cannot assume negative values. In other words, it cannot penetrate ``the barrier at zero''. This article is devoted to the following question: can we ``approximate'' this ``hard barrier'' by a ``soft barrier'' created by a large drift coefficient? More specifically, consider the solution to the following SDE:
$$
\md X(t) = f(X(t))\md t + \tilde{\si}(X(t))\md W(t),\ \ X(0) = Z(0).
$$
Assume that $f(z) \approx g(z)$ for positive $z$ away from zero, $f(z)$ is positive and very large when $z \approx 0$, and $\tilde{\si}(z) \approx \si(z)$ for all $z \ge 0$. Then we can hope that $X \approx Z$ in distribution. A precise statement of this result is given in Theorem~\ref{thm1}. 

Let us give a simple example. Suppose we wish to approximate reflected Brownian motion $Z = (Z(t), t \ge 0)$ on the positive half-line by a solution of SDE. Assume $Z(0) = z > 0$. Consider the following sequence of diffusions:
\begin{equation}
\label{eq:simple-example}
\md X_n(t) = a_n1_{[-c_n, 0]}(X_n(t))\md t + \md W(t),\ \ X_n(0) = z.
\end{equation}
where $a_n, c_n > 0$ are such that 
\begin{equation}
\label{eq:simple-conditions}
a_n \to \infty,\ a_nc_n \to \infty.
\end{equation}
Then for every $T > 0$, the law of $X_n$ on the space $C[0, T]$ of continuous real-valued functions on $[0, T]$ weakly converges to the law of $Z$ on the same space. 

\medskip

In the literature, this is sometimes called the {\it penalty method}, with the {\it penalty function} $f(x) - g(x)$. For example, in the setting~\eqref{eq:simple-example}, the penalty function is $a_n1_{[-c_n, 0]}(x)$. This method aims to emulate the reflection by a strong drift which is directed inside the domain of reflection. Our companion paper \cite{MyOwn8} deals with the case of multidimensional reflected Brownian motion in a domain $D \subseteq \BR^d$, including the case of {\it oblique} (not normal) reflection.

The main idea of the proof is to consider {\it scale functions}:
$$
s_n(x) = \int_{c_n}^x\exp\left(-2\int_{b_n}^y\frac{f_n(z)}{\si_n^2(z)}\md z\right)\md y.
$$
The integration limits $c_n$ and $b_n$ of integration can be chosen arbitrarily, because the function $s_n$ is defined up to an additive and a multiplicative constant. With a certain choice of $s_n$, we have: 
$$
s_n(x) \to 
\begin{cases}
s(x),\ x > 0;\\
-\infty,\ x < 0,
\end{cases}
\ \ \mbox{where}\ \ 
s(x) := 
\int_0^x\exp\left(-2\int_0^y\frac{g(z)}{\si^2(z)}\md z\right)\md y
$$
can be viewed as the scale function for the reflected diffusion $Z$. 

\subsection{Historical review} Reflected diffusions in one or many dimensions were studied extensively since the 1960s. 
The concept of reflected diffusion was pioneered by Skorohod in the papers \cite{Skorohod1961a, Skorohod1961b}; see also the book \cite{SkorohodBook}, and an article \cite{McKean1963}. Without attempting to conduct an extensive survey, let us mention the following articles on multidimensional reflected Brownian motion in general regions: \cite{HR1981a, HW1987b, HW1987a, LS1984, SV1971, Tanaka1979, Watanabe1971a, Watanabe1971b,   Wil1995}. 

Now, let us survey the literature on the penalty method. The papers \cite{PardouxWilliams, WilliamsZheng1990} apply the theory of Dirichlet forms to use penalty method for the case of stationary processes. A related paper \cite{SoftlyRBM} also deals with stationary distributions for the penalized Brownian motion in a convex polyhedron, which is intended to approximate a semimartingale reflected Brownian motion in this polyhedron; the authors call this {\it soft reflection}. 
The paper \cite{Slo1} applies the penalty method to a multidimensional reflected diffusion in a convex domain $D$, with the penalty function $x \mapsto n(x - \Pi(x))$; here, $\Pi(x)$ is the projection of $x$ on $D$. In the companion paper \cite{MyOwn8} mentioned above, we generalize this approach to more general penalty functions. The papers \cite{Slo2, Slo3, Men1, Men2} apply a similar technique to stochastic differential equations with jumps; see also a related paper \cite{Slo4}. 
The penalty method with penalty function $x \mapsto nx_-$ was also used in \cite{Karoui} to approximate reflected backward stochastic differential equations (BSDE) on the real line with non-reflected BSDE and to prove existence of a solution of a reflected BSDE. In the mutlidimensional setting, similar work was carried out in \cite{Slo5, Slo6}. 
Finally, let us mention the articles \cite{BassHsu, Cepa,  Fukushima,  LS1984, Aux1, Shalaumov} on the penalty method. 

\subsection{Relation with skew Brownian motion}

A survey \cite{SkewSurvey} contains a few ways to construct a {\it skew Brownian motion}. This can be loosely described as a Brownian motion such that each excursion is independently filpped to the positive half-line with a certain probability $\al \in (0, 1)$, and to the negative half-line with probability $1 - \al$. Here, $\al$ is called the {\it skewness parameter}. This process is, in some sense, between a usual Brownian motion and a reflected Brownian motion: for $\al = 1/2$ we get the usual Brownian motion, and for $\al = 1$ we get a reflected Brownian motion. 

The survey \cite{SkewSurvey} contains some results about approximation of a skew Brownian motion by solutions of SDEs. In the setting of~\eqref{eq:simple-example}, if $a_nc_n \to k > 0$ as $n \to \infty$ instead of $a_nc_n \to \infty$,  then $X_n$ converges in law to a skew Brownian motion with the skewness parameter 
$$
\al(k) := \frac{e^{2k}}{1 + e^{2k}}.
$$
For $k \to \infty$, we have: $\al(k) \to 1$. Therefore, one expects the skew Brownian motion to converge in law to the reflected Brownian motion. This is a natural way to arrive at the conditions~\eqref{eq:simple-conditions}. Related papers \cite{Portenko1979, Portenko1979a} deal with multidimensional skew Brownian motions.

\subsection{Organization of the paper} Section 2 is devoted to the setup of the problem and the main result (Theorem~\ref{thm1}), as well as corollaries and examples (including the one mentioned above). The proof of Theorem~\ref{thm1} is given in Section 3. The Appendix contains some technical lemmata.

\section{Definitions and the Main Result}

\subsection{Notation} Denote weak convergence by the arrow $\Ra$. Let $C[0, T]$ be the space of continuous functions $[0, T] \to \BR$. We let $\inf\varnothing = \infty$. A {\it standard Brownian motion} is a one-dimensional Brownian motion, starting from zero, with zero drift coefficient and unit diffusion coefficient. For an event $A$, let $A^c$ be its complement. Take a continuous function $f : [a, b] \to \BR$. For $\de > 0$, its {\it modulus of continuity} corresponding to $\de$ is denoted as follows:
$$
\oa(f, [a, b], \de) := \max\limits_{\substack{t_1, t_2 \in [a, b]\\ |t_1 - t_2| \le \de}}|f(t_1) - f(t_2)|.
$$

\subsection{The concept of a reflected diffusion}

We operate in a standard setting: a filtered probability space $(\Oa, \CF, (\CF_t)_{t \ge 0}, \MP)$, with the filtration $(\CF_t)_{t \ge 0}$ satisfying the usual conditions. Take a standard $(\CF_t)_{t \ge 0}$-Brownian motion $W = (W(t), t \ge 0)$. Consider two measurable functions $g : \BR_+ \to \BR$ and $\si : \BR_+ \to \BR_+$.   

\begin{defn} Take a continuous adapted process $Z = (Z(t), t \ge 0)$ with values in $\BR_+$, as well as another continuous adapted process $L = (L(t), t \ge 0)$, with the following properties:

\medskip

(i) $L(0) = 0$, $L$ is nondecreasing and can increase only when $Z = 0$; 
we can write the last property formally as 
$$
\int_0^{\infty}Z(t)\,\md L(t) = 0;
$$

\medskip

(ii) for all $t \ge 0$, 
$$
Z(t) = Z(0) + \int_0^tg(Z(s))\,\md s + \int_0^t\si(Z(s))\,\md W(s) + L(t).
$$

\medskip

Then the process $Z$ is called a {\it reflected diffusion} on $\BR_+$ with {\it drift coefficient $g$} and  {\it diffusion coefficient} $\si$. The process $L$ is called the {\it reflection term} corresponding to $Z$. If $Z(0) = z_0$, we say $Z$ {\it starts from} $z_0$.
\label{def:reflected-diffusion}
\end{defn}

\begin{rmk}
The differential $\md L$ can be viewed as ``the push'' which does not allow $Z$ to go negative when $Z$ is at zero.
\end{rmk}

There are many conditions on the coefficients $g$ and $\si^2$ which ensure the weak or strong existence and uniqueness of a reflected diffusion together with the corresponding reflection term, \cite{Skorohod1961a, Skorohod1961b, McKean1963}. We will just assume that weak existence and uniqueness in law hold. For the rest of the article, we fix $z_0 > 0$. 

\begin{asmp}
For this fixed $z_0 > 0$, the process $Z$ described in Definition~\ref{def:reflected-diffusion} exists in the weak sense and is unique in law.
\end{asmp}

To state the main result, we need some additional conditions on $g$ and $\si$.

\begin{asmp}
The functions $g$ and $\si$ are continuous on $\BR_+$, and $\si(x) > 0$ for all $x \ge 0$. 
\end{asmp}

\subsection{Main result}

Let again $W = (W(t), t \ge 0)$ be a standard $(\CF_t)_{t \ge 0}$-Brownian motion. Take two sequences $(f_n)_{n \ge 1}$ and $(\si_n)_{n \ge 1}$ of measurable functions
$$
f_n : \BR \to \BR,\ \ \si_n : \BR \to \BR_+,\ \ n = 1, 2, \ldots
$$
Consider a sequence $(X_n)_{n \ge 1}$ of approximating diffusions:
\begin{equation}
\label{eq:approx-diffusions}
\md X_n(t) = f_n(X_n(t))\,\md t + \si_n(X_n(t))\,\md W(t),\ X_n(0) = z_n.
\end{equation}

\begin{asmp}
For each $n = 1, 2, \ldots$, the coefficients $f_n, \si_n$ and the initial condition $z_n$ are such that the process $X_n$ defined in ~\eqref{eq:approx-diffusions} exists in the weak sense on the infinite time horizon and is unique in law. 
\end{asmp}

Let us also make an assumption about the initial conditions. We decided to state this separately from the conditions in the body of Theorem~\ref{thm1}, which deal with $f_n$ and $\si_n$. 

\begin{asmp} $z_n \to z_0$ as $n \to \infty$.
\end{asmp}

Now, let us state the main result of the paper.

\begin{thm}
\label{thm1}
Under Assumptions 1 - 4, suppose that:

\medskip

(i) there exists $\eps_0 > 0$ such that for every $\eps \in (0, \eps_0)$,
$$
\lim\limits_{n \to \infty}\int_{-\eps}^{\eps}f_n(x)\,\md x = \infty;
$$

\medskip

(ii) for every $[x_1, x_2] \subseteq (0, \infty)$, 
$$
\int_{x_1}^{x_2}|f_n(x) - g(x)|\,\md x \to 0,\ \ n \to \infty;
$$

\medskip

(iii) for every $x > 0$, we have: $\si_n \to \si$ uniformly on $[0, x]$;

\medskip

(iv) the family of functions $(\si_n)_{n \ge 1}$ is equicontinuous at $x = 0$, that is, 
$$
\lim\limits_{\de \to 0}\sup\limits_{n \ge 1}\sup\limits_{-\de \le x \le \de}|\si_n(x) - \si_n(0)| = 0.
$$

\medskip

(v) for every $\eps > 0$, there exist $n_0$ and $\de > 0$ such that
$$
f_n(x) > g(x) -\eps\ \ \mbox{for}\ \ n > n_0,\ |x| < \de.
$$

\medskip

Then for every $T > 0$, as $n \to \infty$,
$$
X_n \Ra Z\ \ \mbox{weakly in}\ \ C[0, T].
$$
\end{thm}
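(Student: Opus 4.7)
The plan is to implement the scale-function strategy sketched in the introduction. Fix $T > 0$. First I would introduce the scale functions
$$
s_n(x) := \int_0^x \exp\!\left(-2 \int_{b_n}^y \frac{f_n(z)}{\si_n^2(z)}\,\md z\right)\md y,
\qquad
s(x) := \int_0^x \exp\!\left(-2 \int_0^y \frac{g(z)}{\si^2(z)}\,\md z\right)\md y,
$$
choosing the base points $b_n \downarrow 0$ so that: (a) $s_n \to s$ and $s_n' \to s'$ locally uniformly on $(0, \infty)$, by conditions (ii) and (iii) together with the continuity of $g/\si^2$; and (b) $s_n(x) \to -\infty$ for every $x < 0$, by condition (i), since the inner integral $\int_{b_n}^y f_n/\si_n^2\,\md z$ diverges to $-\infty$ for each such $y$.

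Next I would pass to the transformed process $Y_n(t) := s_n(X_n(t))$. Since $s_n \in C^2(\BR)$ is designed to kill the drift ($\tfrac{1}{2}\si_n^2 s_n'' + f_n s_n' \equiv 0$), It\^o's formula yields
$$
\md Y_n(t) = \tau_n(Y_n(t))\,\md W(t), \qquad \tau_n(y) := s_n'(s_n^{-1}(y))\,\si_n(s_n^{-1}(y)),
$$
so $Y_n$ is a continuous local martingale with an explicit quadratic variation. The target limit is $Y(t) := s(Z(t))$, which, by a direct It\^o calculation from Definition~\ref{def:reflected-diffusion}, is a nonnegative semimartingale of the form $\md Y = \tau(Y)\,\md W + s'(0)\,\md L$ with $\tau(y) := s'(s^{-1}(y))\,\si(s^{-1}(y))$. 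Condition (iv), together with (iii) and the previous step, is exactly what guarantees $\tau_n \to \tau$ uniformly on compact subsets of $[0,\infty)$, including at the boundary.

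I would then establish tightness of $(Y_n)$ in $C[0,T]$ by localizing via exit times $\rho_M^n := \inf\{t : |Y_n(t)| \ge M\}$, using the uniform bound on $\tau_n$ on compacts to control the stopped quadratic variation, and showing that $\rho_M^n > T$ with probability tending to $1$ as $M \to \infty$ uniformly in $n$. The delicate point is ruling out deep excursions of $Y_n$ below $0$: the martingale property pins $E[Y_n(t)] = Y_n(0) \to s(z_0) > 0$, while condition (v) prevents a persistent downward push that could drive $Y_n$ arbitrarily negative. Any subsequential weak limit $Y$ is then a continuous local martingale with $\langle Y\rangle_\cdot = \int_0^\cdot \tau^2(Y(s))\,\md s$ and $Y \ge 0$.

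The last step is to identify $X := s^{-1}(Y)$, well-defined on $[0,\infty)$, as the reflected diffusion of Definition~\ref{def:reflected-diffusion}. Applying It\^o to $s^{-1}$ away from the boundary recovers the bulk SDE with coefficients $g$ and $\si$, while the local time of $Y$ at $0$, pulled back through $s^{-1}$, supplies the nondecreasing reflection term $L$ supported on $\{X = 0\}$. Uniqueness in law (Assumption~1) then pins down the limit, and $X_n \Ra Z$ along the full sequence. The main obstacle I anticipate is not any single step in isolation, but the joint identification of the reflection term: one must show that the asymptotic pathwise effect of the soft-barrier excursions of $X_n$ below $0$ converges to precisely the local time of the limiting reflected diffusion, which is where conditions (i), (iv) and (v) must all be used in concert.
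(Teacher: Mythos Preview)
Your plan has a genuine gap at the identification step. You assert that any subsequential weak limit $Y$ of $Y_n = s_n(X_n)$ is a nonnegative continuous local martingale with $\langle Y\rangle_t = \int_0^t \tau^2(Y_s)\,\md s$, and then expect to recover the reflection term as ``the local time of $Y$ at $0$.'' But these two claims are incompatible: a nonnegative local martingale whose quadratic variation grows at rate $\tau^2(0) = (s'(0)\si(0))^2 > 0$ at the boundary cannot exist (once a nonnegative supermartingale hits $0$ it is absorbed, so $\langle Y\rangle$ would stop growing). Indeed the target $s(Z)$ satisfies $\md s(Z) = \tau(s(Z))\,\md W + s'(0)\,\md L$ and is \emph{not} a local martingale. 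The martingale property of $Y_n$ does \emph{not} pass to the limit, precisely because your uniform bound on $\tau_n$ only holds on compacts of $[0,\infty)$: for $y$ slightly below $0$ one has $s_n^{-1}(y) < 0$ and $s_n'(s_n^{-1}(y))$ blows up (this is exactly what makes $s_n(x)\to -\infty$ for $x<0$). Thus during excursions of $X_n$ below $0$ the process $Y_n$ accumulates quadratic variation that does not vanish in the limit; it is this contribution that becomes the bounded-variation part $s'(0)\,\md L$. Your tightness argument via $\rho_M^n$ is exposed to the same problem: you cannot rule out large negative excursions of $Y_n$ using bounds on $\tau_n$ restricted to $[0,\infty)$.

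The paper circumvents this by \emph{not} passing to the limit in the transformed coordinates. It uses the scale functions $s_n$ only for two auxiliary purposes---to show via hitting-probability formulas that $\min_{[0,T]} X_n^C > -\eps$ with high probability (Lemma~3.2), and to prove tightness of $X_n^C$ itself (Lemma~3.3). The limit is then identified in the original coordinates through the explicit decomposition $X_n^C = Z_n + L_n$ with $L_n(t) = \int_0^{t\wedge T_C^{(n)}}(f_n - g)(X_n(u))\,\md u$; one shows $Z_n$ converges to the ``free'' part $z_0 + \int g(\ol Z)\,\md u + \int \si(\ol Z)\,\md \ol W$, and separately that the limit $\ol L$ of $L_n$ is nondecreasing (here condition~(v) is used) and flat off $\{\ol Z = 0\}$. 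If you want to salvage your route, you would need to track the negative excursions of $Y_n$ explicitly---e.g.\ prove tightness of $(Y_n^+, \ell_n)$ for a suitable additive functional $\ell_n$ capturing the sub-zero quadratic variation---rather than claim the limit is a martingale and hope the reflection appears for free.
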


\subsection{The case of reflected Brownian motion}

Consider a special case, when $Z$ is a reflected Brownian motion on $\BR_+$, that is, $g \equiv 0$ and $\si \equiv 1$. Suppose we decide to approximate it by diffusion processes with diffusion coefficients $1$, that is, $\si_n \equiv 1,\ n = 1, 2, \ldots$. Then the statement of the theorem can be simplified a bit. 

\begin{cor} Under Assumption 3, suppose that:

\medskip

(i) there exists $\eps_0 > 0$ such that for every $\eps \in (0, \eps_0)$, we have:
$$
\lim\limits_{n \to \infty}\int_{-\eps}^{\eps}f_n(x)\,\md x = \infty;
$$

\medskip

(ii) for every $[x_1, x_2] \subseteq (0, \infty)$, we have:
$$
\lim\limits_{n \to \infty}\int_{x_1}^{x_2}|f_n(x)|\,\md x = 0;
$$

\medskip

(iii) for every $\eps > 0$, there exist $n_0$ and $\de > 0$ such that
$$
f_n(x) > -\eps\ \ \mbox{for}\ \ n > n_0,\ |x| < \de.
$$

\medskip

Then the diffusion processes $X_n$ defined in~\eqref{eq:approx-diffusions} converge weakly to a reflected Brownian motion on $\BR_+$ in $C[0, T]$, for every $T > 0$. 

\label{cor:conv-to-RBM}

\end{cor}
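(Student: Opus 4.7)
The plan is to deduce the corollary directly from Theorem~\ref{thm1} by verifying that its hypotheses reduce, in the present setting $g \equiv 0$, $\si \equiv 1$, $\si_n \equiv 1$, to those stated in the corollary. First I would check Assumptions 1, 2 and 4. Assumption 2 is immediate: the constant functions $g \equiv 0$ and $\si \equiv 1$ are continuous and $\si > 0$. Assumption 1 — weak existence and uniqueness in law of the reflected Brownian motion on $\BR_+$ with initial condition $z_0$ — is classical and follows, for example, from the Skorohod reflection map applied pathwise to a standard Brownian motion. Assumption 4, $z_n \to z_0$, is needed for the corollary and should be read in as part of the setup (the $X_n$ are defined by~\eqref{eq:approx-diffusions} with $X_n(0) = z_n$, so the natural hypothesis $z_n \to z_0$ is in force, with the option $z_n \equiv z_0$ as a trivial subcase).

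Second, I would verify conditions (i)--(v) of Theorem~\ref{thm1} one by one. Condition (i) of the theorem coincides verbatim with condition (i) of the corollary. Condition (ii) of the theorem requires $\int_{x_1}^{x_2}|f_n(x) - g(x)|\,\md x \to 0$ on compact subintervals of $(0, \infty)$; since $g \equiv 0$, this reduces exactly to condition (ii) of the corollary. Conditions (iii) and (iv) of the theorem concern the diffusion coefficients: with $\si_n \equiv 1$ and $\si \equiv 1$, the uniform convergence $\si_n \to \si$ on each $[0, x]$ is trivial, and
\[
\sup_{n \ge 1}\sup_{-\de \le x \le \de}|\si_n(x) - \si_n(0)| = 0
\]
for every $\de > 0$, so the equicontinuity at $0$ is automatic. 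Finally, condition (v) of the theorem becomes $f_n(x) > -\eps$ for $n > n_0$ and $|x| < \de$, which is exactly condition (iii) of the corollary.

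Having verified all the hypotheses, I would invoke Theorem~\ref{thm1} to conclude $X_n \Ra Z$ weakly in $C[0, T]$ for every $T > 0$, where $Z$ is a reflected Brownian motion on $\BR_+$ starting from $z_0$. Since this proof is purely a matter of matching hypotheses, there is no genuine technical obstacle; the substantive work all sits inside Theorem~\ref{thm1}. The only point worth a brief word of care is the trivial verification that the constant diffusion coefficient trivially satisfies the nontrivial-looking equicontinuity clause (iv), which is included in the theorem to handle more general $\si_n$ but collapses to $0 = 0$ here.
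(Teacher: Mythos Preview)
Your proposal is correct and matches the paper's (implicit) approach: the paper states Corollary~\ref{cor:conv-to-RBM} as a direct specialization of Theorem~\ref{thm1} without giving a separate proof, and your verification that conditions (i)--(v) and the standing assumptions collapse to the corollary's hypotheses when $g\equiv 0$, $\si\equiv 1$, $\si_n\equiv 1$ is exactly the intended reasoning.
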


\subsection{Examples}

Now, let us present a few applications of Corollary~\ref{cor:conv-to-RBM}, starting with the example in the Introduction.

\begin{exm} Take two sequences $(a_n)_{n \ge 1}$ and $(c_n)_{n \ge 1}$ of positive numbers, and let
$$
f_n(x) = a_n1_{(-c_n, 0)}(x).
$$
Then the sequence $(f_n)_{n \ge 1}$ always satisfies conditions (ii) and (iii) of Corollary~\ref{cor:conv-to-RBM}. It is an easy exercise to see that condition (i) is satisfied if and only if 
\begin{equation}
\label{simpleexm}
\lim\limits_{n \to \infty}a_n = \infty,\ \lim\limits_{n \to \infty}a_nc_n = \infty.
\end{equation}
\label{exm1}
\end{exm}

\begin{exm} Let us ``erect the wall'' as in previous example, but to the right of $x = 0$: take two sequences $(a_n)_{n \ge 1}$ and $(c_n)_{n \ge 1}$ of positive numbers, and let
$$
f_n(x) = a_n1_{(0, c_n)}(x).
$$
Once again, this satisfies the conditions of Corollary~\ref{cor:conv-to-RBM} if and only if: 
\begin{equation}
\label{secondexm}
\lim\limits_{n \to \infty}a_n = \infty,\ \ \lim\limits_{n \to \infty}c_n = 0,\ \ \mbox{and}\ \ \lim\limits_{n \to \infty}a_nc_n = \infty.
\end{equation}
\label{exm2}
\end{exm}

\begin{exm} Take a measurable function $\psi : \BR \to \BR$ with the following properties:
\begin{equation}
\label{eq:properties-of-psi}
\varliminf\limits_{x \to 0}\psi(x) \ge 0;\ \ \lim\limits_{y \to \infty}\int_{-y}^y\psi(x)\md x =: I \in (0, \infty);\ \ \lim\limits_{x \to \infty}\psi(x) = 0.
\end{equation}
Take two sequences $(a_n)_{n \ge 1}$ and $(c_n)_{n \ge 1}$ of positive real numbers which satisfy~\eqref{secondexm} and, in addition,
\begin{equation}
\label{eq:thirdexm}
\lim\limits_{n \to \infty}\left[a_n\sup\limits_{x \ge x_0/c_n}|\psi(x)|\right] = 0\ \ \mbox{for every}\ \ x_0 > 0.
\end{equation}
Define the following sequence of functions:
$$
f_n(x) := a_n\psi(x/c_n),\ \ n = 1, 2, \ldots
$$
Then this sequence of functions satisfies the conditions of Corollary~\ref{cor:conv-to-RBM}. Indeed, let us verify condition (i): for every $\eps > 0$, as $n \to \infty$,
$$
\int_{-\eps}^{\eps}f_n(x)\md x = a_nc_n\int_{-\eps/c_n}^{\eps/c_n}\psi(y)\md y \to \infty,
$$
because from~\eqref{secondexm} and~\eqref{eq:properties-of-psi} we get:
$$
a_nc_n \to \infty\ \ \mbox{and}\ \ \int_{-\eps/c_n}^{\eps/c_n}\psi(y)\md y \to I > 0.
$$
Next, condition (ii) follows from~\eqref{eq:thirdexm}, and condition (iii) follows from $\varliminf_{x \to 0}\psi(x) \ge 0$. Examples 1 and 2 are actually two particular cases of this more general example, with 
\begin{equation}
\label{eq:exm-of-psi}
\psi(x) = 1_{[-1, 0]}(x)\ \ \mbox{and}\ \ \psi(x) = 1_{[0, 1]}(x),
\end{equation}
correspondingly. Note that~\eqref{eq:thirdexm} follows automatically from~\eqref{secondexm} when $\psi$ has compact support, as in~\eqref{eq:exm-of-psi}. There are other examples of functions $\psi$ which satisfy~\eqref{eq:properties-of-psi}, such as $\psi(x) = e^{-|x|}1_{(-\infty, 0]}$. This latter function was used to construct the so-called {\it exponential reflected Brownian motion} in the paper \cite{SoftlyRBM}. We consider this topic in our companion paper \cite{MyOwn8}.  
\label{exm3}
\end{exm}

\section{Proof of Theorem~\ref{thm1}}

\subsection{Outline of the proof} For the rest of this section, we fix the time horizon $T > 0$. Let us split the proof of the main result, Theorem~\ref{thm1}, into a few lemmata. Every subsection of this section will contain a proof of the corresponding lemma. In the current subsection, we enunciate these lemmata and show how they fit together. 

Extend the continuous functions $g, \si : \BR_+ \to \BR$ to the whole real line by making them constant on the negative half-line: $g(x) := g(0)$ and $\si(x) := \si(0)$ for $x \le 0$. First, we {\it localize}: namely, we stop processes $X_n$ when they hit a certain level $C > 0$. More formally, for every $C > 0$, let 
$$
T_C^{(n)} := \inf\{t \in [0, T]\mid X_n(t) = C\}\wedge T,\ \ n = 1, 2, \ldots
$$
Denote the stopped processes by
$$
X_n^C(t) \equiv X_n(t\wedge T_C^{(n)}).
$$

\begin{lemma} Suppose we proved that for every $C> 0$, every weak limit point of $(X_n^C)_{n \ge 1}$ in $C[0, T]$ behaves as the reflected diffusion $Z$ until it hits $C$. Then $X_n \Ra Z$ in $C[0, T]$. 
\label{lemma:localization}
\end{lemma}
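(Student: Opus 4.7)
The plan is a standard localization argument: use the identification of limits for the stopped sequence $(X_n^C)$ provided by the hypothesis, and then send $C \to \infty$ to recover convergence of the unstopped processes, exploiting the fact that $Z$ stays bounded on the compact interval $[0,T]$ almost surely.

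First, I would upgrade the hypothesis to genuine weak convergence $X_n^C \Ra Z^C$ in $C[0,T]$, where $Z^C(t) := Z(t \wedge T_C^Z)$ and $T_C^Z := \inf\{t \ge 0 : Z(t) = C\} \wedge T$. Each $X_n^C$ is constant on $[T_C^{(n)}, T]$, so any weak subsequential limit $Y$ of $(X_n^C)$ is likewise constant on $[T_C^Y, T]$. The hypothesis identifies the law of $Y$ up to $T_C^Y$ with that of $Z$ up to $T_C^Z$, and Assumption 1 gives uniqueness of this law; hence every subsequential limit coincides in distribution with $Z^C$. Combined with tightness of $(X_n^C)$ (to be established in subsequent lemmata of this section), this pins down the full weak limit.

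Next, for an arbitrary bounded continuous functional $F : C[0,T] \to \BR$, I would split
\[
|\EE F(X_n) - \EE F(Z)| \le |\EE F(X_n^C) - \EE F(Z^C)| + 2\norm{F}_\infty \bigl(\MP(T_C^{(n)} < T) + \MP(T_C^Z < T)\bigr),
\]
using that $X_n \equiv X_n^C$ on $\{T_C^{(n)} = T\}$ and the analogous statement for $Z$. The first term vanishes as $n \to \infty$ by the previous step. To control the hitting probability for $X_n$, I would note that $\{T_C^{(n)} < T\} \subseteq \{\sup_{t \le T} X_n^C(t) \ge C\}$ and apply the Portmanteau theorem to the closed set $\{y \in C[0,T] : \sup_{[0,T]} y \ge C\}$, obtaining
\[
\limsup_{n \to \infty} \MP(T_C^{(n)} < T) \le \MP\bigl(\sup\nolimits_{t \le T} Z^C(t) \ge C\bigr) = \MP\bigl(\sup\nolimits_{t \le T} Z(t) \ge C\bigr),
\]
where the last equality uses continuity of $Z$ and the construction of $Z^C$.

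Finally, I would let $C \to \infty$. Since $Z$ has continuous sample paths on the compact interval $[0,T]$, $\sup_{t \le T} Z(t)$ is almost surely finite, so both $\MP(\sup_{t \le T} Z(t) \ge C) \to 0$ and $\MP(T_C^Z < T) \to 0$. Therefore $\limsup_n |\EE F(X_n) - \EE F(Z)| = 0$, and since $F$ was arbitrary, $X_n \Ra Z$ in $C[0,T]$. The main obstacle is that stopping a path at a hitting level is not a continuous operation on $C[0,T]$, so weak convergence of the stopped sequence does not transfer directly to the unstopped ones. I sidestep this by comparing unstopped and stopped versions only through the (continuous) supremum functional together with Portmanteau, at the price of a small error that is uniform in $n$ and controlled by $\MP(\sup_{t \le T} Z(t) \ge C)$, which vanishes as $C \to \infty$.
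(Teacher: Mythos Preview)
Your overall strategy---compare $X_n$ with $X_n^C$, compare $Z$ with $Z^C$, and let $C \to \infty$---is sound and close in spirit to the paper's argument. However, your first step, upgrading the hypothesis to full weak convergence $X_n^C \Ra Z^C$, has a gap. You write that since each $X_n^C$ is constant on $[T_C^{(n)}, T]$, any weak subsequential limit $Y$ is likewise constant on $[T_C^Y, T]$. This inference is false in general: take deterministic paths that rise to $C - 1/n$ at time $T/2$ and then fall back. Each such path never reaches $C$, so $T_C^{(n)} = T$ and the path is trivially constant on $\{T\}$; but the uniform limit $Y$ satisfies $Y(T/2) = C$, hence $T_C^Y = T/2$, and $Y$ is \emph{not} constant on $[T/2, T]$. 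The hypothesis of the lemma only pins down the law of $Y$ up to $T_C^Y$ and says nothing after; the paper explicitly flags this in the remark following the statement. Without this claim, you cannot conclude $\EE F(X_n^C) \to \EE F(Z^C)$, and your decomposition of $|\EE F(X_n) - \EE F(Z)|$ does not close.

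The paper sidesteps the issue by never asserting $Y \stackrel{d}{=} Z^C$. Instead it fixes the open set $\CA = \{h : \max_{[0,T]} h < C\}$, observes that the laws of $Y$ and $Z$ agree on all measurable subsets of $\CA$ (on $\CA$ the process never touches $C$, so the stopping is irrelevant), and applies Portmanteau only to sets of the form $\CA \cap \CG$ with $\CG$ open. The resulting error is at most $\MP(Z \notin \CA) \le \eta$; a diagonal argument over $\eta \downarrow 0$ finishes. Your argument can be repaired in the same spirit: instead of claiming $\EE F(Y) = \EE F(Z^C)$, bound
\[
|\EE F(Y) - \EE F(Z^C)| \le 2\norm{F}_\infty\,\MP(T_C^Y < T) = 2\norm{F}_\infty\,\MP(T_C^Z < T),
\]
using that on $\{T_C^Y = T\}$ one has $Y = Y(\cdot \wedge T_C^Y)$, and that $T_C^Y$ is a measurable function of the stopped path $Y(\cdot \wedge T_C^Y)$, which is equal in law to $Z^C$. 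This extra term is absorbed into the ones you already send to zero as $C \to \infty$.
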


\begin{rmk} Note that we are not concerned with behavior of this weak limit point {\it after} it hits $C$. It might be stopped there, or it might still move after this hitting moment. We only need this weak limit point to behave as the reflected diffusion $Z$ {\it before} it hits $C$. 
\end{rmk}

Now, fix $C > 0$. The next part of the proof is devoted to showing that  for large $n$, the process $X^C_n$ does not get far away into the negative half-line $(-\infty, 0)$. 

\begin{lemma}
\label{lemma:not-far-into-negative}
For every $\eps > 0$, 
$$
\lim\limits_{n \to \infty}\MP\left(\min\limits_{0 \le t \le T}X^C_n(t) \le -\eps\right) = 0.
$$
\end{lemma}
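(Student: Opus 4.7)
The plan is to use the scale function of $X_n$ and the classical ratio formula for the probability of hitting $-\eps$ before $C$. Fix $\de \in (0, z_0)$ and define
$$
s_n(x) = \int_{\de}^x \exp\!\left(-2 \int_{\de}^u \frac{f_n(z)}{\si_n^2(z)}\,\md z\right)\md u.
$$
Then $s_n$ is strictly increasing and $C^1$ with $s_n'$ absolutely continuous, and satisfies $\tfrac12\si_n^2 s_n''+f_n s_n'=0$ a.e., so by (generalized) It\^o's formula $s_n(X_n)$ is a local martingale. Without loss of generality $\eps$ is small enough that $\eps/2 < \eps_0$ and condition (v) with $\eps' = 1$ gives $f_n \ge g - 1$ on $[-\eps, \eps]$ for $n \ge n_0$ (larger $\eps$ dominate smaller ones, since $\MP(\min X_n^C \le -\eps)$ is monotone in $\eps$). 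Conditions (iii) and (iv) supply uniform bounds $0 < \si_0 \le \si_n(z) \le \Si$ on $[-\eps, C]$ for large $n$, so the diffusion is non-degenerate on this interval and the exit time $\tau_n := \inf\{t \ge 0 : X_n(t) \in \{-\eps, C\}\}$ is a.s.\ finite. Since $s_n(X_n(\cdot \wedge \tau_n))$ lies in $[s_n(-\eps), s_n(C)]$ it is a bounded martingale, and optional stopping gives
$$
\MP\!\left(\min_{0 \le t \le T} X_n^C(t) \le -\eps\right) \le \MP(X_n(\tau_n) = -\eps) = \frac{s_n(C) - s_n(z_n)}{s_n(C) - s_n(-\eps)}
$$
(if $C \le z_0$ then $z_n > C$ eventually and $X_n^C$ stays $\ge C > 0$, making the bound trivial).

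For the numerator, eventually $z_n > \de$, and on $[\de, C]$ conditions (ii), (iii) give $\int_{\de}^u f_n/\si_n^2\,\md z \to \int_{\de}^u g/\si^2\,\md z$ uniformly in $u$, so $s_n(C) - s_n(z_n)$ converges to the finite number $\int_{z_0}^C \exp(-2\int_\de^u g/\si^2\,\md z)\,\md u$. For the denominator I estimate
$$
s_n(C) - s_n(-\eps) \ge s_n(0) - s_n(-\eps) = \int_{-\eps}^0 \exp\!\left(2\int_u^{\de} \frac{f_n(z)}{\si_n^2(z)}\,\md z\right)\md u.
$$
Using (v) on $[-\eps, \eps]$ and (ii) on $[\eps, \de]$, there is a uniform constant $N$ with $\int_u^{\de} f_n^-(z)\,\md z \le N$ for all $u \in [-\eps, 0]$ and large $n$, hence
$$
\int_u^{\de} \frac{f_n(z)}{\si_n^2(z)}\,\md z \ge \frac{1}{\Si^2}\int_u^{\de} f_n^+(z)\,\md z - \frac{N}{\si_0^2}.
$$
For $u \in [-\eps, -\eps/2]$ the interval $[u, \de]$ contains $[-\eps/2, \eps/2]$, so
$$
\int_u^{\de} f_n^+\,\md z \ge \int_{-\eps/2}^{\eps/2} f_n^+\,\md z \ge \int_{-\eps/2}^{\eps/2} f_n\,\md z \to \infty
$$
by condition (i). Thus $\int_u^\de f_n/\si_n^2\,\md z \to \infty$ uniformly in $u \in [-\eps, -\eps/2]$; given any $K > 0$, for $n$ large the integrand in the denominator is $\ge e^{2K}$ throughout this interval of length $\eps/2$, making the denominator at least $(\eps/2) e^{2K}$, which is arbitrarily large.

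The step I expect to be hardest is this divergence of the denominator. Condition (i) only asserts that the \emph{total} integral $\int_{-\eps}^{\eps} f_n$ is large, and as in Example~\ref{exm2} all of this mass may sit in $(0, \eps)$, in which case $\int_u^0 f_n\,\md z$ vanishes for $u \le 0$ and no direct pushing-drift argument at negative $u$ can succeed. The key trick is to normalize the scale function at a \emph{strictly positive} point $\de$, so that $\int_u^{\de}$ sweeps across zero and captures the full mass provided by (i); splitting $f_n = f_n^+ - f_n^-$ and using (v) to control the negative part then transfers this global largeness into a pointwise-large exponent on an interval of $u$'s of positive Lebesgue measure.
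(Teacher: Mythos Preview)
Your proof is correct and follows the same core strategy as the paper: form the exit-probability ratio $\dfrac{s_n(C)-s_n(z_n)}{s_n(C)-s_n(-\eps)}$ via the scale function and optional stopping, then show the numerator stays bounded while the denominator diverges. The paper normalizes its scale function with a moving inner limit $b_n\to 0$ constructed by a diagonal argument (its Lemmata~\ref{lemma:choice-of-bn}--\ref{lemma:b-n}), so as to obtain full convergence $s_n\to s$, $s_n'\to s'$ on compacts of $(0,\infty)$ together with $s_n(x)\to-\infty$ for $x<0$; you instead fix the normalization at a single point $\de>0$ and estimate $s_n(0)-s_n(-\eps)$ directly via the $f_n^+/f_n^-$ split. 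Your route is a bit more elementary---it avoids the diagonal construction of $(b_n)$ and does not require proving uniform convergence of $s_n$ or $s_n'$---and it makes the role of condition~(v) in controlling $\int f_n^-$ near the origin explicit, whereas in the paper's proof of Lemma~\ref{lemma:conv-of-scale-functions} the corresponding step (the claim $\int_x^1 f_n/\si_n^2\to\infty$) is asserted without spelling this out. The trade-off is that the paper's fuller convergence statement for $s_n$ is reused later in the tightness argument (Lemma~\ref{lemma:Xn-tight}); your version, as written, is tailored to Lemma~\ref{lemma:not-far-into-negative} alone.
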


\medskip

For $n = 1, 2, \ldots$ and $t \in [0, T]$, denote
$$
L_n(t) = \int_0^{t\wedge T^{(n)}_C}\left[f_n(X_n(u)) - g(X_n(u))\right]\,\md u,\ \ 
$$
\begin{equation}
\label{eq:Zn}
Z_n(t) = X^C_n(t) - L_n(t) = z_n + \int_0^{t\wedge T^{(n)}_C}g(X_n^C(u))\,\md u + \int_0^{t\wedge T^{(n)}_C}\si_n(X_n(u))\,\md W(u).
\end{equation}

\begin{lemma}
\label{lemma:Xn-tight}
The sequence $(X_n^C)_{n \ge 1}$ is tight in $C[0, T]$.
\end{lemma}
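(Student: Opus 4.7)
The approach is to verify the standard tightness criterion on $C[0,T]$: the marginal family $\{X_n^C(0)\}$ is automatically tight since $z_n\to z_0$, so it remains to show the oscillation condition
\[
\lim_{\de\to 0}\limsup_{n\to\infty}\MP\bigl(\oa(X_n^C,[0,T],\de)>\eta\bigr)=0\quad\text{for every }\eta>0.
\]
I would work throughout on the high-probability event $B_{n,\eps}:=\{\min_{0\le t\le T}X_n^C(t)\ge -\eps\}$ supplied by Lemma~\ref{lemma:not-far-into-negative}, with $\eps$ smaller than the $\de$ from assumption~(v). On $B_{n,\eps}$, the process is confined to the compact interval $[-\eps,C]$, where $g$ is bounded (using its constant extension to negatives), $\si_n\to\si$ uniformly by (iii)--(iv), and $\si_n$ is eventually bounded away from $0$ by Assumption~2.

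The first step is to use the decomposition $X_n^C=Z_n+L_n$ from~\eqref{eq:Zn} and show tightness of the ``easy'' part $(Z_n)$. Its bounded-variation piece $\int_0^{t\wedge T_C^{(n)}}g(X_n^C(u))\,\md u$ is Lipschitz in $t$ with uniform constant $\sup_{[0,C]}|g|$ on $B_{n,\eps}$. Its continuous-martingale piece $M_n(t)=\int_0^{t\wedge T_C^{(n)}}\si_n(X_n(u))\,\md W(u)$ has quadratic variation uniformly bounded on $B_{n,\eps}$ by a constant times $T$. Via the Dambis--Dubins--Schwarz representation, $M_n$ is a Brownian motion time-changed by a uniformly Lipschitz clock on $B_{n,\eps}$, so by uniform continuity of Brownian motion on compact intervals, $\oa(M_n,\de)\to 0$ in probability as $\de\to 0$ uniformly in $n$. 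Combined with $\MP(B_{n,\eps}^c)\to 0$, this proves tightness of $(Z_n)$.

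The main obstacle is handling the penalty term $L_n(t)=\int_0^{t\wedge T_C^{(n)}}(f_n-g)(X_n(u))\,\md u$, whose integrand can blow up near $x=0$ by~(i). Two ingredients enable control. First, $L_n$ is \emph{approximately nondecreasing} on $B_{n,\eps}$: by~(v), the negative part satisfies $(f_n-g)_-(x)\le\eps$ for $|x|<\de$, while the occupation-time formula
\[
\int_0^T h(X_n(u))\,\md u=\int h(x)\,L^{X_n}(T,x)/\si_n^2(x)\,\md x,
\]
combined with~(ii) and a uniform-in-$n$ bound on the local times $L^{X_n}(T,\cdot)$ (obtained via Tanaka's formula and the tightness of the martingale part of $Z_n$), shows that the contribution of $L_n$ from $\{X_n\ge\de\}$ vanishes in probability. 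Second, a Skorohod-reflection-type comparison: appreciable increases of $L_n$ can occur only on time intervals where $X_n^C$ lingers in $[-\de,\de]$ (so that $X_n^C$ itself has oscillation at most $2\de$ there), whence for such an increase $A=L_n(t)-L_n(s)$ one gets $Z_n(s)-Z_n(t)\ge A-2\de-\eps T-o_\MP(1)$. This yields $\oa(L_n,\de)\le\oa(Z_n,\de)+2\de+\eps T+o_\MP(1)$ on $B_{n,\eps}$, and therefore $\oa(X_n^C,\de)\le 2\oa(Z_n,\de)+2\de+\eps T+o_\MP(1)$. Tightness now follows by letting $\de\to 0$, then $n\to\infty$, then $\eps\to 0$.
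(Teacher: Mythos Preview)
Your decomposition $X_n^C=Z_n+L_n$ followed by a Skorohod-type comparison is a reasonable attack, but there is a real gap at the local-time step, and it propagates into everything that follows. You assert a uniform-in-$n$ bound on $\sup_{x\ge\de}L^{X_n}(T,x)$ ``via Tanaka's formula and the tightness of the martingale part of $Z_n$'', but Tanaka's formula for $X_n$ at level $a\ge\de$ contains a drift term $\int_0^T 1_{\{X_n>a\}}f_n(X_n)\,\md u$, not one involving $g$; after subtracting the bounded piece $\int 1_{\{X_n>a\}}g(X_n)\,\md u$, what remains is precisely the occupation-time integral you are trying to control. This circularity can in fact be closed by a bootstrap (set $J_n:=\int_0^T|f_n-g|(X_n)1_{\{X_n\ge\de\}}\,\md u$, use occupation times to get $J_n\le\underline{\si}^{-2}\|f_n-g\|_{L^1[\de,C]}\sup_aL^{X_n}(T,a)$, feed Tanaka back in to get $\sup_aL^{X_n}(T,a)\le C'+2J_n$, and solve once $\|f_n-g\|_{L^1}$ is small), but that is substantial additional work you have not indicated, and without it neither ``$L_n$ is approximately nondecreasing'' nor ``appreciable increases of $L_n$ occur only near zero'' is established on the region $[\de,C]$. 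Your displayed comparison $\oa(L_n,\de)\le\oa(Z_n,\de)+2\de+\eps T+o_\MP(1)$ also needs one more idea: $X_n^C$ need not remain in $[-\de,\de]$ throughout an interval on which $L_n$ increases, so the phrase ``$X_n^C$ itself has oscillation at most $2\de$ there'' is not justified; the correct device is to pass to the \emph{first} and \emph{last} times in $[s,t]$ at which $\md L_n\ne0$ and evaluate $X_n^C$ there.

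The paper never splits off $L_n$ at all. It applies the scale function: $s_n(X_n)$ is a \emph{driftless} local martingale with diffusion coefficient $s_n'(X_n)\si_n(X_n)$, uniformly bounded on $[\eps,C]$. On $\{\min X_n^C>-\eps\}$, if $\oa(X_n^C,\de)>3\eps$, an intermediate-value argument produces nearby times $t_3,t_4$ with $X_n(t_3),X_n(t_4)\in[\eps,C]$ and $|X_n(t_4)-X_n(t_3)|=\eps$; since $s_n'\to s'>0$ uniformly on $[\eps,C]$, one gets $|s_n(X_n(t_4))-s_n(X_n(t_3))|\ge C_1\eps$, and a martingale modulus estimate (Lemma~\ref{lemma:main-aux}) makes this event have probability $O(\de)$. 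The scale function absorbs the whole penalty drift in one stroke, so no occupation-time or local-time machinery is needed.
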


Fix an increasing sequence $(n_k)_{k \ge 1}$ of positive integers. 

\begin{lemma} 
\label{lemma:Zbar-conv} 
We can find a subsequence $(n'_k)_{k \ge 1}$ of $(n_k)_{k \ge 1}$ such that, after changing the probability space, we have the following a.s. convergence in $C[0, T]\times C[0, T]$:
$$
\left(X^C_{n'_k}, Z_{n'_k}\right) \to (\ol{Z}, \tilde{Z}),\ \ k \to \infty.
$$
The limiting processes $\ol{Z}$ and $\tilde{Z}$ satisfy the following relation: for
$$
t < T^{\ol{Z}}_C := \inf\{t \in [0, T]\mid \ol{Z}(t) = C\}\wedge T,
$$
we have:
$$
\tilde{Z}(t) = z_0 + \int_0^{t}g(\ol{Z}(u))\,\md u + \int_0^{t}\si(\ol{Z}(u))\,\md \ol{W}(u),
$$
where $\ol{W} = (\ol{W}(u), u \ge 0)$ is a certain standard Brownian motion. 
\end{lemma}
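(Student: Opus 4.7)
The plan is to first establish joint tightness of $(X_n^C, Z_n)$ together with the martingale part of $Z_n$, apply Skorokhod representation along a subsequence, and then identify the limiting drift and martingale pieces, using Lévy's characterization to produce the Brownian motion $\ol W$.

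First I would upgrade the tightness of $(X_n^C)$ from Lemma~\ref{lemma:Xn-tight} to tightness of the pair. Decompose $Z_n = z_n + A_n + M_n$ with $A_n(t) = \int_0^{t\wedge T_C^{(n)}} g(X_n^C(u))\,\md u$ and $M_n(t) = \int_0^{t\wedge T_C^{(n)}} \si_n(X_n(u))\,\md W(u)$. On the high-probability event from Lemma~\ref{lemma:not-far-into-negative} that $X_n^C \ge -\eps$ on $[0,T]$, the process $X_n^C$ is confined to $[-\eps, C]$, on which Assumptions (iii)--(iv) of Theorem~\ref{thm1} together with continuity of $\si$ bound $\si_n(X_n(\cdot))$ uniformly in $n$, and continuity of the extended $g$ bounds $g(X_n^C(\cdot))$. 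Burkholder--Davis--Gundy then supplies a Kolmogorov-type moment estimate for $M_n$, giving tightness of $M_n$, and hence of $Z_n$. Passing to a sub-subsequence of $(n_k)$ along which the quadruple $(X_{n'_k}^C, Z_{n'_k}, M_{n'_k}, \langle M_{n'_k}\rangle)$ converges in distribution, Skorokhod representation delivers a.s. convergence to a limit $(\ol Z, \tilde Z, \tilde M, V)$ on a new probability space.

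Next I would pass to the limit in the drift. Fix $t < T_C^{\ol Z}$. Since $\ol Z(0) = z_0 < C$ and $\ol Z$ is continuous, $\max_{s \le t}\ol Z(s) < C$, and uniform convergence $X_{n'_k}^C \to \ol Z$ forces $T_C^{(n'_k)} > t$ for all large $k$. Then $Z_{n'_k}(t) = z_{n'_k} + \int_0^t g(X_{n'_k}^C(u))\,\md u + M_{n'_k}(t)$, and uniform continuity of $g$ on $[-\eps, C]$ combined with uniform convergence of paths gives $\int_0^t g(X_{n'_k}^C(u))\,\md u \to \int_0^t g(\ol Z(u))\,\md u$ a.s., so $\tilde Z(t) = z_0 + \int_0^t g(\ol Z(u))\,\md u + \tilde M(t)$. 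A parallel argument using Assumption (iii), the equicontinuity Assumption (iv), and continuity of $\si$ yields $\si_{n'_k}(X_{n'_k}(\cdot)) \to \si(\ol Z(\cdot))$ uniformly on $[0,t]$, hence $V(t) = \int_0^t \si^2(\ol Z(u))\,\md u$ for $t < T_C^{\ol Z}$. Uniform $L^2$-bounds on $M_n$ supply uniform integrability, so $\tilde M$ is a continuous martingale in the natural filtration of $(\ol Z, \tilde Z)$ with $\langle \tilde M\rangle = V$; since $\si(\ol Z(\cdot)) > 0$, defining $\ol W(t) := \int_0^t \si(\ol Z(u))^{-1}\,\md \tilde M(u)$ (concatenated with an independent standard Brownian motion after $T_C^{\ol Z}$ to extend to all of $[0,T]$) produces a standard Brownian motion by Lévy's characterization, and $\tilde M(t) = \int_0^t \si(\ol Z(u))\,\md \ol W(u)$ as required.

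The main obstacle is ensuring that the a.s. uniform limit $\tilde M$ actually retains the martingale property and has quadratic variation $V$ in the post-Skorokhod filtration. Since the original driving $W$ does not survive the Skorokhod coupling, one cannot simply invoke continuity of the stochastic integral in $W$; instead one must verify the martingale property through uniform integrability of the $M_n$ and check quadratic-variation convergence separately, which is precisely where the confinement of $X_n^C$ to the compact set $[-\eps, C]$ (Lemma~\ref{lemma:not-far-into-negative}) together with Assumptions (iii)--(iv) are used in an essential way.
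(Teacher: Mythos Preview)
Your approach is correct in outline but differs from the paper's. The paper does not establish tightness of $(X_n^C, Z_n, M_n, \langle M_n\rangle)$ directly and then recover the martingale structure of the limit via uniform integrability and L\'evy's characterization. Instead, it applies the Dambis--Dubins--Schwarz representation $M_n(t) = B_n(\langle M_n\rangle_t)$ with $B_n$ a standard Brownian motion, observes that $\langle M_n\rangle_t \le \ol\si_C^2 T$, and passes to a subsequential limit of the pair $(X_{n_k}^C, B_{n_k})$ in $C[0,T]\times C([0,\ol\si_C^2 T+1])$. After Skorokhod representation, the limit $\ol B$ of the $B_{n'_k}$ is automatically a standard Brownian motion, and the uniform convergence $\si_{n'_k}(X_{n'_k}(\cdot)) \to \si(\ol Z(\cdot))$ (the paper's Lemma~\ref{lemma:aux-sigma}) gives $\langle M_{n'_k}\rangle_\cdot \to \int_0^\cdot \si^2(\ol Z)\,\md u$; the convergence of $M_{n'_k}$ then follows simply by composing uniformly convergent functions. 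The driving Brownian motion $\ol W$ is implicit in the time-change of $\ol B$.

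The advantage of the paper's route is that it sidesteps exactly the obstacle you flag: there is no need to argue uniform integrability of $M_n$ or to verify the martingale property in a post-Skorokhod filtration, since the Brownian structure is carried by $B_n$ and survives the limit trivially. Your approach is the standard martingale-problem alternative and is perfectly viable; the one point to handle carefully is that the ``uniform $L^2$-bounds on $M_n$'' you invoke do not hold globally unless $\si_n$ is bounded on all of $\BR$, which is not assumed. You would need to stop at the first exit from $[-\eps, C]$, obtain uniform $L^2$-bounds for the stopped martingales, and then use Lemma~\ref{lemma:not-far-into-negative} to argue the stopping is asymptotically irrelevant. With that localization, your UI-and-L\'evy argument goes through.
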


Now, a.s. uniformly on $[0, T]$, as $k \to \infty$, 
$$
L_{n'_k} \equiv X^C_{n'_k} - Z_{n'_k} \to L := \ol{Z} - \tilde{Z}.
$$
Since $L_{n'_k}(0) = 0$ for all $k = 1, 2, \ldots$, we have: $L(0) = 0$. 

\begin{lemma} On the interval $[0, T^{\ol{Z}}_C]$, the process $\ol{L}$ is a.s. nondecreasing, and it can increase only when $\ol{Z} = 0$. 
\label{lemma:boundary-property-of-L}
\end{lemma}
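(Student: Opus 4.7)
I will work on the probability space supplied by Lemma~\ref{lemma:Zbar-conv}, so that $L_{n'_k} \to L := \ol{Z} - \tilde{Z}$ uniformly on $[0, T]$ almost surely. Invoking Lemma~\ref{lemma:not-far-into-negative} and a Borel--Cantelli argument on a further subsequence (still denoted $n'_k$), I may assume that for every $\delta > 0$, eventually $\min_{[0,T]} X^C_{n'_k} \geq -\delta$ almost surely; in particular $\ol{Z} \geq 0$. The lemma then reduces to two assertions: (a) $L(t) \geq L(s)$ for all $0 \leq s \leq t \leq T_C^{\ol{Z}}$, and (b) $L$ is constant on every open subinterval of $[0, T_C^{\ol{Z}}]$ on which $\ol{Z} > 0$.

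For (a), fix $\varepsilon > 0$ and $s < t < T_C^{\ol{Z}}$, and use (v) to choose $\delta > 0$ and $n_0$ with $f_n - g > -\varepsilon$ on $(-\delta, \delta)$ for $n > n_0$. Uniform convergence of $X^C_{n'_k}$ to $\ol{Z}$ forces $t < T^{(n'_k)}_C$ and $X^C_{n'_k} \geq -\delta$ on $[0,T]$ for large $k$, hence
\[
L_{n'_k}(t) - L_{n'_k}(s) = \int_s^t (f_{n'_k} - g)(X_{n'_k}) \mathbf{1}_{|X_{n'_k}| < \delta} \, du + \int_s^t (f_{n'_k} - g)(X_{n'_k}) \mathbf{1}_{X_{n'_k} \geq \delta} \, du.
\]
The first summand is bounded below by $-\varepsilon T$ by the choice of $\delta$. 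The second summand will be shown to tend to zero by the occupation-time estimate below, which converts (ii) into vanishing of the time integral. Sending $k \to \infty$, then $\varepsilon \to 0$, and using continuity of $L$ to extend to the closed interval, gives $L(t) - L(s) \geq 0$.

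For (b), suppose $\ol{Z} > 0$ on an open interval $(a, b)$, take any $[s, t] \subset (a, b)$, and pick $\eta > 0$ with $\min_{[s,t]} \ol{Z} \geq \eta$. Uniform convergence forces $X^C_{n'_k} \in [\eta/2, C]$ on $[s, t]$ for all large $k$, so $L_{n'_k}(t) - L_{n'_k}(s) = \int_s^t (f_{n'_k} - g)(X_{n'_k})\, du$ with the values of the integrand drawn only from the spatial region $[\eta/2, C]$. The same occupation-time bound, together with (ii), makes this vanish as $k \to \infty$, so $L(t) = L(s)$ on every such $[s, t]$ and hence $L$ is constant on $(a, b)$.

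The principal obstacle, common to both parts, is the estimate
\[
\int_s^t h_n(X^C_{n'_k}(u)) \mathbf{1}_{X^C_{n'_k}(u) \geq \delta} \, du \to 0 \quad \text{whenever } h_n \geq 0 \text{ and } \int_\delta^C h_n(x)\, dx \to 0,
\]
which turns $L^1$-smallness of $h_n = |f_n - g|$ in space into smallness of the time integral along the trajectory. The natural tool is a Krylov/occupation-time inequality of the form $\int_0^T h(X_n(u)) \mathbf{1}_{X_n(u) \in [\delta, C]}\, du \leq M \int_\delta^C h(x)\, dx$ with $M$ independent of $n$; the uniform nondegeneracy of $\sigma_n$ on $[0, C]$ (from (iii)-(iv)) together with the uniform local-$L^1$ bound on $f_n$ provided by (ii) makes this plausible, and the cleanest route is via the scale-function machinery flagged in the introduction and encoded in the technical lemmata of the Appendix.
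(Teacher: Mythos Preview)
Your reduction to an occupation-time estimate is reasonable, and the treatment of the nondecreasing part via condition (v) is close to what the paper does once part (b) is in hand. However, the central step---bounding $\int_s^t |f_n - g|(X_n)\,\mathbf{1}_{X_n \in [\de, C]}\,\md u$ by a constant times $\int_\de^C |f_n - g|$---is not proved, and your pointer to the Appendix is misplaced: Lemmata~\ref{lemma:aux}--\ref{lemma:main-aux} concern only uniform convergence of compositions and a modulus-of-continuity bound for stochastic integrals; no Krylov-type inequality appears there. One can in fact obtain such a bound in expectation by passing to the local martingale $s_n(X_n^C)$, whose local time at levels $s_n(a)$ with $a\in[\de,C]$ is controlled via Tanaka because the process is bounded above by $s_n(C)$, and then using the change-of-variable relation $L_T^{s_n(a)}(s_n(X_n))=s_n'(a)\,L_T^a(X_n)$ together with the uniform lower bound on $s_n'$ over $[\de,C]$. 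But that argument is not in the paper and you have not supplied it; as written, the crucial step is a hope rather than a proof.

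The paper's own route for part (b) is genuinely different and sidesteps occupation times entirely. It sets $Y_n(t):=s^{-1}(s_n(X_n^C(t)))-Z_n(t)$ and observes that on $\{\min_{[t_1,t_2]}X_n^C>\eta\}$ one has $|Y_n-L_n|\le \eps_n:=\max_{x\in[\eta,C]}|s^{-1}(s_n(x))-x|\to 0$, since $s_n\to s$ uniformly on $[\eta,C]$. An It\^o computation then gives $\md Y_n=\be_n(X_n^C)\,\md t+\al_n(X_n^C)\,\md W$ with explicit coefficients $\al_n,\be_n\to 0$ uniformly on $[\eta,C]$; Lemma~\ref{lemma:main-aux} forces $Y_n(t_2)-Y_n(t_1)\to 0$ in probability, hence the same for $L_n$. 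The point is that hypothesis (ii) enters only through the uniform convergence $s_n\to s$ on compacts of $(0,\infty)$, which upgrades to uniform convergence of $s^{-1}\circ s_n$ to the identity---so no local-time or density control is needed. The paper then deduces the nondecreasing property from (b) together with (v), localizing around each $t_0$ according to whether $\ol{Z}(t_0)>0$ (where $L$ is already known to be constant) or $\ol{Z}(t_0)=0$ (where (v) applies directly), rather than handling both regimes through an occupation-time bound as in your plan.
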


Combining the results of Lemmata~\ref{lemma:Zbar-conv}, and~\ref{lemma:boundary-property-of-L}, we get the following result: The process $\ol{Z}$ is a version of the reflected diffusion on $\BR_+$ with drift coefficient $g$ and diffusion coefficient $\si$, starting from $z$, at least until it hits the level $C$. We arrive at the conclusion that for every subsequence $(n_k)_{k \ge 1}$ there exists a subsequence $(n'_k)_{k \ge 1}$ such that $X_{n'_k}^C$ weakly converges (in $C[0, T]$, as $k \to \infty$) to a process $\ol{Z}$ is a version of this reflected diffusion $Z$, at least until $\ol{Z}$ hits $C$. Use Lemma~\ref{lemma:localization} to complete the proof of Theorem~\ref{thm1}. 

\subsection{Proof of Lemma~\ref{lemma:localization}}

Take a small probability $\eta > 0$. Then there exists $C > 0$ such that 
$$
\MP\left(\max\limits_{0 \le t \le T}Z(t) < C\right) \ge 1 - \eta.
$$
Fix an increasing sequence $(n_k)_{k \ge 1}$ of positive integers. Then it has a subsequence $(n'_k)_{k \ge 1}$ such that , in $C[0, T]$,
$$
X^C_{n'_k} \Ra \ol{Z}, \ \ k \to \infty.
$$
By assumption of this lemma, the process $\ol{Z}$ behaves as $Z$ until it hits the level $C$. Therefore,
$$
\MP\left(\max\limits_{0 \le t \le T}\ol{Z}(t) < C\right) = \MP\left(\max\limits_{0 \le t \le T}Z(t) < C\right) \ge 1 - \eta.
$$
The set $\CA := \{h \in C[0, T]\mid \max_{t \in [0, T]}h(t) < C\}$ is open in $C[0, T]$. For every measurable subset $\mathcal B \subseteq \CA$, we have:
\begin{equation}
\label{eq:equality-for-Z}
\MP(\ol{Z} \in \mathcal B) = \MP(Z \in \mathcal B).
\end{equation}
Note that we can write
$$
\{Z \in \CA\} = \left\{\max\limits_{0 \le t \le T}Z(t) < C\right\} = \left\{\max\limits_{0 \le t \le T}Z^C(t) < C\right\} = \{Z^C \in \CA\}.
$$
Therefore, $\MP(\ol{Z} \in \CA) = \MP(Z \in \CA) > 1 - \eta$. 
For any open set $\CG \subseteq C[0, T]$, we have:
$$
\MP(X_n \in \CG)  \ge \MP(X_n \in \CA\cap \CG) = \MP(X_n^C \in \CA\cap \CG).
$$
The set $\CA\cap \CG$ is also open in $C[0, T]$. Applying the portmanteau theorem, see \cite[Section 1.2]{BillingsleyBook},
\begin{equation}
\label{eq:1}
\varliminf\limits_{k \to \infty}\MP(X_{n'_k} \in \CG)  \ge \varliminf\limits_{k \to \infty}\MP(X^C_{n'_k} \in \CA\cap\CG) \ge \MP(\ol{Z} \in \CA\cap\CG).
\end{equation}
Applying~\eqref{eq:equality-for-Z} to $\mathcal B := \CA\cap\CG$, we get:
\begin{equation}
\label{eq:2}
\MP(\ol{Z} \in \CA\cap\CG) = \MP(Z \in \CA\cap\CG).
\end{equation}
Finally, it is straightforward to get the following estimate:
\begin{equation}
\label{eq:3}
\MP(Z \in \CA\cap\CG) \ge \MP(Z \in \CG) - \MP(Z \notin \CA) \ge \MP(Z \in \CG) - \eta.
\end{equation}
Combining~\eqref{eq:1},~\eqref{eq:2},~\eqref{eq:3}, we get:  
$$
\varliminf\limits_{k \to \infty}\MP(X_{n'_k} \in \CG) \ge \MP(Z \in \CG) - \eta.
$$
Now, a small difficulty arises: the subsequence $(n'_k)_{k \ge 1}$ might depend on $\eta$. But we can bypass this by taking a sequence $\eta_m := 1/m$ and constructing a sequence of inserted subsequences $(n_k^{(m)})_{k \ge 1}$ such that $(n_k^{(m+1)})_{k \ge 1}$ is a subsequence of $(n^{(m)}_k)_{k \ge 1}$. Then 
$$
\varliminf\limits_{k \to \infty}\MP(X_{n^{(m)}_k} \in \CG) \ge \MP(Z \in \CG) - \frac1m.
$$
Take the diagonal subsequence $\ol{n}_k := n^{(k)}_k,\ k = 1, 2, \ldots$ Then 
$$
\varliminf\limits_{k \to \infty}\MP(X_{\ol{n}_k} \in \CG) \ge \MP(Z \in \CG).
$$
This is true for every open set $\CG \subseteq C[0, T]$. By another application of the portmanteau theorem, $X_{\ol{n}_k} \Ra Z$. We have shown that every increasing sequence $(n_k)_{k \ge 1}$ of positive integers contains a subsequence $(\ol{n}_k)_{k \ge 1}$ such that $X_{\ol{n}_k} \Ra Z$. This proves that $X_n \Ra Z$. 

\subsection{Proof of Lemma~\ref{lemma:not-far-into-negative}} We split this subsection into a few parts. In the first part, we give an overview of the proof: we formulate a series of auxillary lemmata and explain how they fit together to make a proof of Lemma~\ref{lemma:not-far-into-negative}. In the subsequent parts, we prove each of these auxillary lemmata. 

\subsubsection{Overview of the proof of Lemma~\ref{lemma:not-far-into-negative}} 

The key tool of the proof is convergence of scale functions. The scale function $s_n$ for the diffusion $X_n$ is defined as follows:
$$
s_n(x) = \int_{c_n}^x\exp\left(-2\int_{b_n}^y\frac{f_n(z)}{\si_n^2(z)}\,\md z\right)\md y.
$$
Here, the lower limits $c_n$ and $b_n$ of integration can be chosen arbitrarily, because the function $s_n$ is defined up to an additive and a multiplicative constant. For our purposes, we choose $c_n = 1$ and $b_n > 0$ to be determined from the following lemma.

\begin{lemma}
\label{lemma:choice-of-bn}
There exists a sequence $(b_n)_{n \ge 1}$ such that $b_n > 0$ and $b_n \to 0$ as $n \to \infty$, and for every $x > 0$ we have:
$$
\lim\limits_{n \to \infty}\int_{b_n}^{x}\left|\frac{f_n(z)}{\si^2(z)} - \frac{g(z)}{\si^2(z)}\right|\md z = 0.
$$
\end{lemma}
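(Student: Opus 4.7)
The plan is a standard diagonalization argument. Hypothesis~(ii) of Theorem~\ref{thm1} tells us that on every fixed interval $[x_1,x_2]\subseteq(0,\infty)$ the quantity $\int_{x_1}^{x_2}|f_n-g|\,\md z$ tends to zero, and by Assumption~3 the function $1/\si^2$ is bounded on every compact subset of $[0,\infty)$. The task is therefore to let $b_n\downarrow 0$ slowly enough that the $L^1$-convergence of $f_n-g$ on $[b_n,x]$ survives multiplication by $1/\si^2$.

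Concretely, set $M(k):=\sup_{0\le z\le k}\si^{-2}(z)$, which is finite because $\si$ is continuous and strictly positive on the compact set $[0,k]$. By hypothesis~(ii) applied to the interval $[1/k,k]$,
$$
I_n(k):=\int_{1/k}^{k}|f_n(z)-g(z)|\,\md z\longrightarrow 0\qquad\text{as } n\to\infty,
$$
so one can choose a strictly increasing sequence of indices $N_1<N_2<\cdots$ such that $I_n(k)<1/(kM(k))$ whenever $n\ge N_k$. Define $b_n:=1/k$ on the block $N_k\le n<N_{k+1}$ (and $b_n:=1$ for $n<N_1$), so that $b_n>0$ and $b_n\to 0$. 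The verification is then immediate: fix $x>0$; for all sufficiently large $n$ the index $k=k(n)$ satisfying $N_k\le n<N_{k+1}$ exceeds $x\vee 1$, so $b_n=1/k\le x\le k$ and $[b_n,x]\subseteq[1/k,k]$. Since $\si^{-2}\le M(k)$ on this interval,
$$
\int_{b_n}^{x}\frac{|f_n(z)-g(z)|}{\si^2(z)}\,\md z \le M(k)\cdot I_n(k)<\frac{1}{k}\longrightarrow 0.
$$

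The only real subtlety is the finiteness of $M(k)$, which is why the argument relies crucially on $\si$ being strictly positive on all of $\BR_+$ (Assumption~3); if $\si$ were allowed to vanish anywhere on $[0,\infty)$, the diagonal step would break, and one would have to work harder to control the integrand near the zeros of $\si$.
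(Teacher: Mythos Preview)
Your proof is correct and follows essentially the same diagonalization strategy as the paper: the paper first shows $\int_{x_1}^{x_2}\bigl|f_n/\si_n^2 - g/\si^2\bigr|\,\md z\to 0$ on every fixed $[x_1,x_2]\subset(0,\infty)$ and then applies an abstract diagonal lemma (your block construction of $b_n=1/k$ on $N_k\le n<N_{k+1}$ is exactly that lemma). Two minor remarks: the positivity and continuity of $\si$ that give $M(k)<\infty$ come from Assumption~2, not Assumption~3; and the paper in fact carries $\si_n^2$ rather than $\si^2$ under $f_n$, which costs one extra estimate ($\si_n^{-2}\to\si^{-2}$ uniformly on compacta), but for the statement as written your argument is complete.
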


Define the limiting scale function: 
$$
s(x) = \int_0^x\exp\left(-2\int_0^y\frac{g(z)}{\si^2(z)}\,\md z\right)\md y.
$$
Recall that we extended the functions $g$ and $\si$ to the real line. Therefore, we can define $s(x)$ using this formula for all $x \in \BR$. With this choice of $b_n$ as in Lemma~\ref{lemma:choice-of-bn}, 
we can prove convergence of scale functions:

\begin{lemma}
\label{lemma:conv-of-scale-functions}
For every $[x_1, x_2] \subseteq (0, \infty)$, uniformly on $[x_1, x_2]$,
$$
\lim\limits_{n \to \infty}s_n = s,\ \ \lim\limits_{n \to \infty}s'_n = s'.
$$
In addition, for $x < 0$, $\lim\limits_{n \to \infty}s_n(x) = -\infty$.  
\end{lemma}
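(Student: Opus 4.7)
My plan is to work at the level of derivatives first: set $G_n(y) := \int_{b_n}^y f_n(z)/\sigma_n^2(z)\,dz$ and $G(y) := \int_0^y g(z)/\sigma^2(z)\,dz$, so that $s_n' = e^{-2G_n}$ and $s' = e^{-2G}$. The positive-side claim will follow from $G_n \to G$ uniformly, and the negative-side claim from pointwise divergence $s_n'(y) \to +\infty$ combined with Fatou's lemma.

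For $[x_1, x_2] \subset (0, \infty)$, I decompose
$$
G_n(y) - G(y) = \int_{b_n}^y f_n\Bigl(\tfrac{1}{\sigma_n^2} - \tfrac{1}{\sigma^2}\Bigr)\,dz + \int_{b_n}^y \tfrac{f_n - g}{\sigma^2}\,dz - \int_0^{b_n}\tfrac{g}{\sigma^2}\,dz.
$$
The third term vanishes since $b_n \to 0$ and $g/\sigma^2$ is continuous near $0$. The middle term vanishes uniformly in $y \in [x_1, x_2]$ by Lemma~\ref{lemma:choice-of-bn}. For the first term, condition~(iii) of Theorem~\ref{thm1} together with $\sigma > 0$ on $[0, x_2]$ gives $\sup_{z \in [0, x_2]}|1/\sigma_n^2(z) - 1/\sigma^2(z)| \to 0$; meanwhile $\int_{b_n}^{x_2}|f_n|\,dz$ is uniformly bounded because $|f_n| \le |f_n - g| + |g|$ and $\int_{b_n}^{x_2}|f_n - g|\,dz \le M^2 \int_{b_n}^{x_2}|f_n - g|/\sigma^2\,dz \to 0$, where $M = \max_{[0,x_2]}\sigma$. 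So $G_n \to G$ uniformly on $[x_1, x_2]$, hence $s_n' \to s'$ uniformly there by continuity of the exponential. Integrating yields $s_n(x) - s_n(x_1) \to s(x) - s(x_1)$ uniformly in $x \in [x_1, x_2]$; since scale functions are defined only up to affine transformation, we may absorb the additive constant to obtain $s_n \to s$ uniformly on $[x_1, x_2]$.

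For the divergence on the negative half-line, I show that $s_n'(y) = \exp\bigl(2\int_y^{b_n} f_n/\sigma_n^2\,dz\bigr) \to +\infty$ pointwise for each fixed $y$ with $-\delta_0 < y < 0$ and $\delta_0$ small. Split $[y, b_n] = [y, -|y|/2] \cup [-|y|/2, b_n]$. On the first piece, condition~(v) gives $f_n \ge -K$ on $[-\delta, \delta]$ for large $n$, bounding its integral below by a constant. On the second piece, condition~(i) applied with $\eps = |y|/2$ gives $\int_{-|y|/2}^{|y|/2} f_n \to \infty$, and the bounded correction $\int_{b_n}^{|y|/2} f_n$ (bounded by the positive-side argument) still leaves $\int_{-|y|/2}^{b_n} f_n \to \infty$. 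To insert the weight $1/\sigma_n^2$, condition~(iv) combined with $\sigma_n(0) \to \sigma(0) > 0$ gives $1/\sigma_n^2(z) = 1/\sigma^2(0) + o(1)$ uniformly on a shrinking neighborhood of $0$; and $f_n^- \le K$ (from (v)) makes $\int|f_n|$ differ from $\int f_n$ by $O(1)$, so the perturbation contributes a factor $1 + o(1)$ to the divergent main term $\int f_n/\sigma^2(0)$. Hence $\int_y^{b_n} f_n/\sigma_n^2 \to +\infty$, so $s_n'(y) \to +\infty$. Fatou's lemma then gives $\int_x^0 s_n'(y)\,dy \to \infty$ for $x \in (-\delta_0, 0)$, while $s_n(0) = -\int_0^1 s_n'(y)\,dy$ stays bounded by the positive-side convergence. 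Thus $s_n(x) \to -\infty$ for $x \in (-\delta_0, 0)$, and monotonicity of $s_n$ (since $s_n' > 0$) extends this to every $x < 0$.

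The main obstacle is the last step: the hypotheses on $f_n$ only control integrals over symmetric windows about $0$, whereas we need divergence on the asymmetric window $[y, b_n]$. The resolution is to exploit condition~(v) to keep $f_n^-$ uniformly bounded near $0$, so that the divergent mass of $\int f_n$ is effectively carried by the nonnegative part $f_n^+$, which behaves well under restriction to subintervals and under multiplication by the positive weight $1/\sigma_n^2 \approx 1/\sigma^2(0)$.
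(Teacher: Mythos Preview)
Your approach is essentially the paper's: show $\log s_n'\to\log s'$ uniformly on compacts in $(0,\infty)$ and integrate, then show $s_n'(y)\to\infty$ for $y$ in a left neighborhood of $0$ and apply Fatou. Your negative-side argument, invoking condition~(v) to bound $f_n^-$ near $0$, is in fact more careful than the paper's, which simply asserts $\int_x^1 f_n/\sigma_n^2\to\infty$ without justification. Two small corrections are needed. First, the claim that $s_n(0)=-\int_0^1 s_n'$ stays bounded is not implied by the positive-side convergence (which holds only on compacts in $(0,\infty)$) and is in fact false in general---in Example~2 one checks $s_n(0)\to-\infty$; fortunately it is unnecessary, since $s_n'\ge 0$ gives $-s_n(x)=\int_x^1 s_n'\ge\int_x^0 s_n'$, and Fatou finishes. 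Second, for fixed $y<0$ the interval $[-|y|/2,b_n]$ is not shrinking, so the claim $1/\sigma_n^2(z)=1/\sigma^2(0)+o(1)$ there is not correct; what condition~(iv) together with $\sigma_n(0)\to\sigma(0)>0$ actually gives, after choosing $\delta_0$ small enough, is uniform two-sided bounds $0<c\le 1/\sigma_n^2\le C$ on $[-\delta_0,\delta_0]$, and that already suffices: $\int f_n^+/\sigma_n^2\ge c\int f_n^+\ge c\int f_n\to\infty$ while $\int f_n^-/\sigma_n^2$ stays bounded. Finally, ``absorb the additive constant'' is not legitimate as stated since $s_n$ and $s$ are specific functions; integrate instead from the common base point $1$ (where $s_n(1)=0$ by the choice $c_n=1$), and note that all downstream uses of $s_n\to s$ in the paper are affine-invariant anyway.
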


The next lemma shows that for large $n$, the process $X_n$ is more and more likely to hit any negative level only after hitting any fixed level $a > z_0$.  

\begin{lemma} 
For every $\eps > 0$ and $a > z_0$, we have:
$$
\lim\limits_{n \to \infty}\MP\left(T^{(n)}_{-\eps} > T^{(n)}_a\right) = 1.
$$
\label{lemma:conv-of-hitting-times}
\end{lemma}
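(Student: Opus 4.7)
The plan is to derive an exact closed-form expression for $\MP(T_a^{(n)} < T_{-\eps}^{(n)})$ in terms of the scale function $s_n$, and then pass to the limit using Lemma~\ref{lemma:conv-of-scale-functions}. Set $\tau_n := T_a^{(n)} \wedge T_{-\eps}^{(n)}$. Since $s_n$ is strictly increasing and formally satisfies $\tfrac12 \si_n^2 s_n'' + f_n s_n' = 0$, the Itô--Tanaka formula (appropriate here because $s_n'$ is absolutely continuous even when $f_n$ is merely measurable, as in Example~\ref{exm1}) shows that $s_n(X_n(\cdot \wedge \tau_n))$ is a continuous local martingale. It takes values in $[s_n(-\eps), s_n(a)]$, so it is in fact a bounded martingale. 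Optional stopping at $\tau_n$ (see the technical remark below) yields
$$
s_n(z_n) \;=\; \EE\bigl[s_n(X_n(\tau_n))\bigr] \;=\; p_n\, s_n(a) + (1 - p_n)\, s_n(-\eps),
$$
where $p_n := \MP(T_a^{(n)} < T_{-\eps}^{(n)}) = \MP(T_{-\eps}^{(n)} > T_a^{(n)})$. Solving,
$$
p_n \;=\; \frac{s_n(z_n) - s_n(-\eps)}{s_n(a) - s_n(-\eps)}.
$$

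With this formula in hand, the limit $p_n \to 1$ follows immediately from Lemma~\ref{lemma:conv-of-scale-functions}. Indeed, since $z_n \to z_0 > 0$ and $s_n \to s$ uniformly on any compact subinterval of $(0, \infty)$ containing $z_0$ and $a$, we have $s_n(z_n) \to s(z_0)$ and $s_n(a) \to s(a)$, both finite. Meanwhile $s_n(-\eps) \to -\infty$. Thus the numerator and denominator above each tend to $+\infty$, while their difference $s_n(a) - s_n(z_n)$ remains bounded (in fact tends to $s(a) - s(z_0) > 0$). Consequently $p_n \to 1$, as required.

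The principal technical point to watch is the justification of optional stopping, which rests on $\tau_n < \infty$ almost surely. One cannot avoid a brief regularity argument here: the SDE~\eqref{eq:approx-diffusions} being well-posed implicitly forces non-degeneracy of $\si_n$ on $[-\eps, a]$, and a standard Engelbert--Schmidt style comparison (or direct estimate on the quadratic variation of $s_n(X_n)$) then gives $\MP(\tau_n < \infty) = 1$. Alternatively, one avoids the issue altogether by stopping at $\tau_n \wedge N$, applying bounded martingale convergence to the continuous bounded martingale $s_n(X_n(\cdot \wedge \tau_n))$ to conclude that $s_n(X_n(t \wedge \tau_n))$ converges a.s. as $t \to \infty$, and observing that the positivity of $\si_n$ on $[-\eps, a]$ rules out convergence to an interior value, so the limit must be $s_n(a)$ or $s_n(-\eps)$. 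This is the only delicate ingredient; the heart of the proof is simply the ratio of scale function values, which explodes in just the right way thanks to Lemma~\ref{lemma:conv-of-scale-functions}.
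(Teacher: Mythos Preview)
Your proof is correct and follows essentially the same route as the paper: compute the exit probability via the scale function and optional stopping, then let $s_n(-\eps)\to-\infty$ using Lemma~\ref{lemma:conv-of-scale-functions}. Your version is in fact slightly more careful than the paper's---you use $z_n$ rather than $z_0$ as the starting point and you address the finiteness of $\tau_n$, both of which the paper glosses over.
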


But if we let $a = C$, then 
$$
\left\{T^{(n)}_{-\eps} > T^{(n)}_a\right\} = \left\{\min\limits_{t \ge 0}X^C_n(t) > -\eps\right\} \subseteq \left\{\min\limits_{0 \le t \le T}X^C_n(t) > -\eps\right\}.
$$
This completes the proof of Lemma~\ref{lemma:not-far-into-negative}. 

\begin{rmk}
\label{rmk:sigmas}
For every $x > 0$, $\si_n \to \si$ uniformly on $[0, x]$, and $\si$ is a positive continuous function on $[0, x]$. Therefore, there exist $n_x$, $\ol{\si}_x$ and $\underline{\si}_x$ such that for $n \ge n_x$ and $y \in [0, x]$, we have:
\begin{equation}
\label{eq:def-of-sigmas}
0 < \underline{\si}_x \le \si_n(y) \le \ol{\si}_x < \infty,\ \ 0 < \underline{\si}_x \le \si(y) \le \ol{\si}_x < \infty.
\end{equation}
\end{rmk}

\subsubsection{Proof of Lemma~\ref{lemma:choice-of-bn}}

Let us split the proof into the two following lemmata. First, we show convergence of integrals over any fixed subinterval. 

\begin{lemma}
\label{lemma:conv-on-fixed-intervals}
For every $[x_1, x_2] \subseteq (0, \infty)$, 
\begin{equation}
\label{eq:conv-with-constant-limits}
\lim\limits_{n \to \infty}\int_{x_1}^{x_2}\left|\frac{f_n(z)}{\si_n^2(z)} - \frac{g(z)}{\si^2(z)}\right|\md z = 0.
\end{equation}
\end{lemma}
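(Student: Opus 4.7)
The plan is to reduce this to the three ingredients the theorem already gives us on $[x_1,x_2]\subseteq(0,\infty)$: condition (ii) ($\int_{x_1}^{x_2}|f_n-g|\,\md z\to 0$), condition (iii) ($\si_n\to\si$ uniformly on $[0,x_2]$), and the uniform upper and lower bounds on $\si_n,\si$ given in Remark~\ref{rmk:sigmas}. The obvious move is the triangle inequality
$$
\left|\frac{f_n(z)}{\si_n^2(z)}-\frac{g(z)}{\si^2(z)}\right|
\le |f_n(z)|\left|\frac1{\si_n^2(z)}-\frac1{\si^2(z)}\right|
+\frac{|f_n(z)-g(z)|}{\si^2(z)},
$$
and to control each summand separately after integrating over $[x_1,x_2]$.

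For the second summand I would just use the lower bound $\si\ge\underline\si_{x_2}>0$ from Remark~\ref{rmk:sigmas} to pull out $\underline\si_{x_2}^{-2}$ and then invoke condition (ii) directly. For the first summand, the factor $|1/\si_n^2-1/\si^2|$ can be rewritten as $|\si^2-\si_n^2|/(\si_n^2\si^2)$; the denominator is bounded below by $\underline\si_{x_2}^4$ for $n\ge n_{x_2}$, while the numerator is bounded above by $2\ol\si_{x_2}\|\si_n-\si\|_{L^\infty[0,x_2]}$, which tends to $0$ by condition (iii). Thus $\|1/\si_n^2-1/\si^2\|_{L^\infty[x_1,x_2]}\to 0$, and it remains only to show that $\int_{x_1}^{x_2}|f_n(z)|\,\md z$ stays bounded in $n$.

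That last bound is the only place that needs a word: since $g$ is continuous on $\BR_+$, it is bounded on $[x_1,x_2]$, and condition (ii) gives
$$
\int_{x_1}^{x_2}|f_n(z)|\,\md z \le \int_{x_1}^{x_2}|f_n(z)-g(z)|\,\md z + \int_{x_1}^{x_2}|g(z)|\,\md z,
$$
the first term tending to $0$ and the second finite and independent of $n$. Combining these estimates gives the first summand $\le \|1/\si_n^2-1/\si^2\|_{L^\infty[x_1,x_2]}\cdot\const \to 0$, and together with the bound on the second summand this yields~\eqref{eq:conv-with-constant-limits}.

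I do not really expect an obstacle here: the computation is just a bookkeeping exercise and the uniform two-sided bounds on $\si,\si_n$ on $[0,x_2]$ (which stay away from $0$ thanks to continuity of $\si$ and uniform convergence) make every constant in sight harmless. The one small point to keep in mind is that the bounds in Remark~\ref{rmk:sigmas} only hold for $n\ge n_{x_2}$, so the statement is proved by passing to the tail of the sequence, which is enough for a limit.
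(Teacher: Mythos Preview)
Your proof is correct. You use the decomposition
\[
\frac{f_n}{\si_n^2}-\frac{g}{\si^2} = f_n\left(\frac{1}{\si_n^2}-\frac{1}{\si^2}\right) + \frac{f_n-g}{\si^2},
\]
whereas the paper uses the alternative split
\[
\frac{f_n}{\si_n^2}-\frac{g}{\si^2} = \frac{f_n-g}{\si_n^2} + g\left(\frac{1}{\si_n^2}-\frac{1}{\si^2}\right).
\]
Both work: in each case one summand is handled by condition~(ii) together with the uniform lower bound on $\si$ or $\si_n$ from Remark~\ref{rmk:sigmas}, and the other by the uniform convergence $\si_n^{-2}\to\si^{-2}$ on $[x_1,x_2]$. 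The paper's choice is marginally more economical because it pairs the factor $|\si_n^{-2}-\si^{-2}|$ with $|g|$, which is already bounded on $[x_1,x_2]$ by continuity, so no auxiliary step is needed; your version pairs it with $|f_n|$, which is why you had to insert the (easy) observation $\int_{x_1}^{x_2}|f_n|\le\int_{x_1}^{x_2}|f_n-g|+\int_{x_1}^{x_2}|g|$ to secure a uniform $L^1$ bound. Apart from this cosmetic difference the two arguments are interchangeable.
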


Then we show the existence of the sequence $(b_n)_{n \ge 1}$ with the required properties. It suffices to prove the following auxillary lemma and apply the results to 
$$
g_n(z) := \frac{f_n(z)}{\si_n^2(z)} - \frac{g(z)}{\si^2(z)}.
$$

\begin{lemma}
\label{lemma:b-n}
Take a sequence $(g_n)_{n \ge 1}$ of measurable functions $g_n : (0, \infty) \to \BR$ such that for every $[x_1, x_2] \subseteq (0, \infty)$, we have:
$$
\lim\limits_{n \to \infty}\int_{x_1}^{x_2}|g_n(z)|\,\md z = 0.
$$
Then there exists a sequence $(b_n)_{n \ge 1}$ of positive numbers with $b_n \to 0$ and for all $x_0 > 0$, 
$$
\lim\limits_{n \to \infty}\int_{b_n}^{x_0}|g_n(z)|\,\md z = 0.
$$
\end{lemma}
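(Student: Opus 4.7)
The plan is to prove Lemma~\ref{lemma:b-n} by a standard diagonal/exhaustion argument. The hypothesis gives convergence to zero of $\int_{x_1}^{x_2}|g_n|\,\md z$ on every fixed compact subinterval of $(0,\infty)$, but we need a \emph{moving} lower limit $b_n \to 0$. The mechanism: for each $k$, the convergence on $[1/k, k]$ gives us an index $N_k$ from which on the integral on $[1/k,k]$ is small; piecing together the thresholds $N_k$ defines a suitable $b_n$.

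More concretely, first I would apply the hypothesis to the sequence of intervals $[1/k, k]$, $k = 1, 2, \ldots$. For each $k$, there exists $N_k \ge 1$ such that
\[
\int_{1/k}^{k}|g_n(z)|\,\md z < \frac1k \quad \text{for all } n \ge N_k.
\]
By increasing them if necessary, I would arrange $N_1 < N_2 < N_3 < \cdots$ with $N_k \to \infty$. Then I would define
\[
b_n := \frac1k \quad \text{for every } n \in [N_k, N_{k+1}),
\]
and $b_n := 1$ for $n < N_1$. Since $N_k \to \infty$, the index $k$ associated with $n$ tends to infinity as $n \to \infty$, so $b_n \to 0$.

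To verify the required convergence, fix any $x_0 > 0$ and any $\eta > 0$. Choose $k_0$ with $k_0 > \max(x_0, 1/x_0, 1/\eta)$. For $n \ge N_{k_0}$, we have $n \in [N_k, N_{k+1})$ for some $k \ge k_0$, hence $b_n = 1/k \le 1/k_0 < x_0$ and $x_0 < k_0 \le k$. Therefore $[b_n, x_0] \subseteq [1/k, k]$, and the choice of $N_k$ gives
\[
\int_{b_n}^{x_0}|g_n(z)|\,\md z \le \int_{1/k}^{k}|g_n(z)|\,\md z < \frac1k \le \frac1{k_0} < \eta.
\]
This shows $\int_{b_n}^{x_0}|g_n(z)|\,\md z \to 0$ as $n \to \infty$, for every $x_0 > 0$.

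There is no serious obstacle here; the only delicate point is the bookkeeping that ensures $b_n \to 0$ while keeping $[b_n, x_0]$ eventually inside an interval on which we already have good control. The diagonal construction above handles both requirements simultaneously. Applying this lemma to $g_n(z) := f_n(z)/\si_n^2(z) - g(z)/\si^2(z)$, which satisfies the hypothesis by Lemma~\ref{lemma:conv-on-fixed-intervals}, yields the sequence $(b_n)$ of Lemma~\ref{lemma:choice-of-bn}.
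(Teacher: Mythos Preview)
Your proof is correct and follows essentially the same diagonal construction as the paper: choose thresholds $N_k$ so that the integral over a shrinking interval is below $1/k$ for $n\ge N_k$, make the $N_k$ strictly increasing, and set $b_n=1/k$ on $[N_k,N_{k+1})$. The only difference is that you work with the intervals $[1/k,k]$ rather than the paper's $[1/k,1]$, which lets you handle all $x_0>0$ in one stroke instead of treating $x_0>1$ separately; this is a mild streamlining, not a different method.
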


{\it Proof of Lemma~\ref{lemma:conv-on-fixed-intervals}.} We have:
\begin{align*}
\int_{x_1}^{x_2}\left|\frac{f_n(z)}{\si_n^2(z)} - \frac{g(z)}{\si^2(z)}\right|\md z &\le \int_{x_1}^{x_2}\frac{|f_n(z) - g(z)|}{\si_n^2(z)}\,\md z \\ & + 
\int_{x_1}^{x_2}|g(z)|\left|\si_n^{-2}(z) - \si^{-2}(z)\right|\md z := I_1(n) + I_2(n).
\end{align*}
Recall from~\eqref{eq:def-of-sigmas} that $\si^2_n(y) \ge \underline{\si}_{x_2}^2$ for all $y \in [x_1, x_2]$ and $n \ge n_{x_2}$. Using the condition (ii) of Theorem~\ref{thm1}, we get: as $n \to \infty$,
$$
I_1(n) \le \underline{\si}^{-2}_{x_2}\int_{x_1}^{x_2}\left|f_n(z) - g(z)\right|\md z \to 0.
$$
Since $\si_n^2 \to \si^2$ uniformly on $[x_1, x_2]$, by Lemma~\ref{lemma:aux} (ii), $\si_n^{-2} \to \si^{-2}$ uniformly on $[x_1, x_2]$. And the function $g$ is continuous (therefore, it is bounded) on $[x_1, x_2]$. Thus, uniformly on $[x_1, x_2]$, 
$$
|g(z)|\left|\si_n^{-2}(z) - \si^{-2}(z)\right| \to 0.
$$
Which proves that $I_2(n) \to 0$ as $n \to \infty$. $\square$

\medskip

{\it Proof of Lemma~\ref{lemma:b-n}.} For $b \in (0, 1)$ and $n = 1, 2, \ldots$, let
$$
M(b, n) := \int_b^1|g_n(z)|\,\md z.
$$
Then for every $b \in (0, 1)$, we have: $\lim_{n \to \infty}M(b, n) = 0$. Take $b := 1/k$ and find $n_k$ such that for $n \ge n_k$ we have: $M(1/k, n_k) \le 1/k$. Now, let $\ol{n}_1 := n_1$ and $\ol{n}_{k+1} := \max(\ol{n}_1, \ldots, \ol{n}_k, n_{k+1}) + 1$. Then $\ol{n}_1 < \ol{n}_2 < \ldots$, and for $n \ge \ol{n}_k$, $M\left(1/k, \ol{n}_k\right) \le 1/k$. 
Define the sequence $(b_n)_{n \ge 1}$ as follows: $b_{n} = k^{-1}$ for  $\ol{n}_{k} \le  n < \ol{n}_{k+1}$. 
For $n < \ol{n}_1$, just let $b_n = 1$. Then we get: for $\ol{n}_{k} \le  n < \ol{n}_{k+1}$, 
$$
M(b_n, n) = M(1/k, n) \le \frac1k.
$$
Therefore, $M(b_n, n) \to 0$ as $n \to \infty$. This proves the statement of the lemma for $x_0 \le 1$. For $y > 1$, just note that 
$$
\int_1^{x_0}|g_n(z)|\,\md z \to 0.
$$
Thus, we can represent the original integral as
$$
\int_{b_n}^{x_0}|g_n(z)|\,\md z = \int_{b_n}^1|g_n(z)|\,\md z + \int_1^{x_0}|g_n(z)|\,\md z \to 0. \ \ \square
$$

\subsubsection{Proof of Lemma~\ref{lemma:conv-of-scale-functions}}

First, let us show that for $x > 0$, $\lim\limits_{n \to \infty}s'_n(x) = s'(x)$. Indeed, by Lemma~\ref{lemma:choice-of-bn}, we get: as $n \to \infty$, 
$$
\int_{b_n}^x\left|\frac{f_n(z)}{\si_n^2(z)} - \frac{g(z)}{\si^2(z)}\right|\md z \to 0.
$$
But we have:
\begin{equation}
\label{eq:comp-of-integrals-for-scale}
\left|\int_{b_n}^x\frac{f_n(z)}{\si_n^2(z)}\,\md z - \int_0^x\frac{g(z)}{\si^2(z)}\,\md z\right| \le \int_{b_n}^x\left|\frac{f_n(z)}{\si_n^2(z)} - \frac{g(z)}{\si^2(z)}\right|\md z + \int_0^{b_n}\frac{|g(z)|}{\si^2(z)}\,\md z.
\end{equation}
We need only to show that, as $n \to \infty$, 
\begin{equation}
\label{eq:integral-to-zero}
\int_0^{b_n}\frac{|g(z)|}{\si^2(z)}\,\md z \to 0.
\end{equation}
To prove~\eqref{eq:integral-to-zero}, we need only to note that $b_n \to 0$ as $n \to \infty$, $\si^{-2}(z) \le \underline{\si}^{-2}_1$ for $z \in [0, 1]$ because of~\eqref{eq:def-of-sigmas}, and the function $g$ is continuous. From~\eqref{eq:comp-of-integrals-for-scale}, it follows that, as $n \to \infty$, for all $x > 0$,
$$
\log s'_n(x) = -2\int_{b_n}^x\frac{f_n(z)}{\si_n^2(z)}\md z \to -2\int_0^x\frac{g(z)}{\si^2(z)}\md z = \log s'(x).
$$
Therefore, $s'_n(x) \to s'(x)$. Now, we show this convergence is uniform on every $[x_1, x_2] \subseteq (0, \infty)$. There exists an $n_0$ such that for $n \ge n_0$ we have: $b_n < x_1$. Then for $x \in [x_1, x_2]$, from~\eqref{eq:comp-of-integrals-for-scale}, we get:
$$
\left|\log s'_n(x) - \log s'(x)\right| \le \int_{b_n}^{x_2}\left|\frac{f_n(z)}{\si_n^2(z)} - \frac{g(z)}{\si^2(z)}\right|\md z + \int_0^{b_n}\frac{|g(z)|}{\si^2(z)}\md z \to 0,\ \ n \to \infty. 
$$
Therefore, $\log s'_n(x) \to \log s'(x)$ uniformly on $[x_1, x_2]$. The function $\log s'(x)$ is continuous on $[x_1, x_2]$, and therefore is bounded on this segment. There exist $n_2$ and $C_0 > 0$ such that for $x \in [x_1, x_2]$, $n \ge n_2$, we have:
$$
\left|\log s'_n(x)\right| \le C_0,\ \ \left|\log s'(x)\right| \le C_0.
$$
The function $z \mapsto e^z$ is uniformly continuous on $[-C_0, C_0]$, and $\log s'_n(x) \to \log s_n(x)$ uniformly on $[x_1, x_2]$. Therefore, uniformly on $[x_1, x_2]$,
$$
s'_n(x) = e^{\log s'_n(x)} \to e^{\log s'(x)} = s'(x),\ \ n \to \infty,\ \ \mbox{and}
$$
$$
s_n(x) = \int_1^xs'_n(z)\md z \to s(x) = \int_1^xs'(z)\md z,\ \ n \to \infty,
$$
and the convergence is uniform on every $[x_1, x_2] \subseteq (0, \infty)$. Finally, let us show that $s_n(x) \to -\infty$ for $x < 0$. Indeed, if we prove that $s'_n(x) \to \infty$ for $x < 0$, then $s'_n(x) \ge 0$, and by Fatou's lemma,  
$$
-s_n(x) = -\int_1^xs'_n(y)\md y = \int_x^1s'_n(y)\md y \ge \int_x^0 s'_n(y)\md y \to \infty.
$$
It suffices to show that 
\begin{equation}
\label{eq:4}
\log s'_n(x) = -2\int_{b_n}^x\frac{f_n(z)}{\si_n^2(z)}\md z = 2\int_x^{b_n}\frac{f_n(z)}{\si_n^2(z)}\md z \to \infty.
\end{equation}
But we have:
\begin{equation}
\label{eq:5}
\int_x^{b_n}\frac{f_n(z)}{\si_n^2(z)}\md z = \left(\int_x^1 - \int_{b_n}^1\right)\frac{f_n(z)}{\si_n^2(z)}\md z,\ \ \mbox{and}\ \ \int_x^1\frac{f_n(z)}{\si_n^2(z)}\md z \to \infty.
\end{equation}
From the relation~\eqref{eq:comp-of-integrals-for-scale} applied to $x = 1$, we get:
\begin{equation}
\label{eq:6}
\int_{b_n}^1\frac{f_n(z)}{\si_n^2(z)}\md z \to \int_0^1\frac{g(z)}{\si^2(z)}\md z.
\end{equation}
Combining~\eqref{eq:5} and~\eqref{eq:6}, we get~\eqref{eq:4}. 

\subsubsection{Proof of Lemma~\ref{lemma:conv-of-hitting-times}}

The process $s_n(X_n(\cdot))$ is a local martingale. Therefore, the process
$s_n(X_n(\cdot\wedge T^{(n)}_{-\eps}\wedge T^{(n)}_a))$ is a bounded martingale. By the optional stopping theorem,
\begin{equation}
\label{eq:fraction}
\MP\left(T^{(n)}_{-\eps} < T^{(n)}_a\right) = \frac{s_n(a) - s_n(z_0)}{s_n(a) - s_n(-\eps)}.
\end{equation}
But by Lemma~\ref{lemma:conv-of-scale-functions}, $s_n(a) \to s(a),\ s_n(z_0) \to s(z_0),\ s_n(-\eps) \to -\infty$. Therefore, the quantity in~\eqref{eq:fraction} converges to zero, which completes the proof. 

\subsection{Proof of Lemma~\ref{lemma:Xn-tight}}

Since $X^C_n(0) = z_n \to z_0 = Z(0)$, by the Arzela-Ascoli criterion it suffices to show that for all $\eps > 0$, 
$$
\lim\limits_{\de \to 0}\sup\limits_{n \ge 1}\MP\left(\oa(X^C_n, [0, T], \de) > 3\eps\right) = 0.
$$
Note that $X^C_n$ is constant on $[T^{(n)}_C, T]$, and is equal to $X_n$ on $[0, T^{(n)}_C]$. Therefore, $\oa(X^C_n, [0, T], \de) = \oa(X_n, [0, T^{(n)}_C], \de)$. 
Assume $\oa(X_n, [0, T^{(n)}_C], \de) > 3\eps$, and $\min_{[0, T]}X^C_n > -\eps$. Then there exist $t_1, t_2 \in [0, T^{(n)}_C]$ such  that $|t_1 - t_2| \le \de$ , and $|X_n(t_1) - X_n(t_2)| > 3\eps$. Assume without loss of generality $X_n(t_1) - X_n(t_2) > 3\eps$. Since $X_n(t_2) > -\eps$, we get: $X_n(t_1) > 2\eps$. By continuity of $X_n$, we can find $t_3$ between $t_1$ and $t_2$ such that $X_n(t_3) = X_n(t_2) + 2\eps > 3\eps - \eps = \eps$. And we can find $t_4$ between $t_1$ and $t_3$ such that $X_n(t_4) = X_n(t_3) + \eps$. Then 
$$
X_n(t_3), X_n(t_4) \in [\eps, C],\ \ |t_4 - t_3| \le |t_1 - t_2| \le \de.
$$
But $s'_n(x) > 0$ for $x \in [\eps, C]$ and $n = 1, 2, \ldots$, and $s'_n(x) \to s'(x) > 0$ uniformly on $[\eps, C]$. Therefore, there exists $C_1 > 0$ such that for all $n = 1, 2, \ldots$, $s'_n(x) \ge C_1,\ x \in [\eps, C]$. We have:
\begin{equation}
\label{eq:difference-between-scale-functions}
s_n(X_n(t_4)) - s_n(X_n(t_3)) \ge C_1(X_n(t_4) - X_n(t_3)) \ge C_1\eps.
\end{equation}
But the process $s_n(X_n(\cdot))$ satisfies the following stochastic equation:
\begin{equation}
\label{eq:SDE-for-s_n(X_n)}
\md s_n(X_n(t)) = s'_n(X_n(t))\si_n(X_n(t))\md W(t),
\end{equation}
with the coefficient satisfying
\begin{equation}
\label{eq:estimate-on-diffusion}
\left|s'_n(x)\si_n(x)\right| \le C_0,\ \ x \in [\eps, C],\ \ n = 1, 2, \ldots
\end{equation}
By Lemma~\ref{lemma:main-aux} (proved in the Appendix), we have: for a certain real constant $\vk > 0$, 
$$
\MP\left(\exists\, t_3, t_4 \in [0, T]:\ |t_3 - t_4| \le \de,\ s_n(X_n(t_4)) - s_n(X_n(t_3)) = C_1\eps\right) \le \varkappa\frac{C_0^4T}{(C_1\eps)^2}\de.
$$
In other words, because of the estimate~\eqref{eq:estimate-on-diffusion}, the probability of the event~\eqref{eq:difference-between-scale-functions} is very small for small $\de$. 
Therefore, for all $n \ge 1$, 
$$
\MP\left(\oa(X^C_n, [0, T], \de) \ge 3\eps\right) \le \varkappa\frac{C_0^4T}{(C_1\eps)^2}\de.
$$
Letting $\de \to 0$, we complete the proof of Lemma~\ref{lemma:Xn-tight}. 

\subsection{Proof of Lemma~\ref{lemma:Zbar-conv}}
For $t \in [0, T]$ and $n = 1, 2,\ldots$, let
$$
M_n(t) := \int_0^{t\wedge T^{(n)}_C}\si_n(X_n(u))\md W(u),\ \ M(t) := \int_0^{t\wedge T^{\ol{Z}}_C}\si(\ol{Z}(u))\md W(u).
$$
These are continuous local martingales. We can express them as time-changed Brownian motions:
$$
M_n(t) = B_n\left(\langle M_n\rangle_t\right),\ \ M(t) = B\left(\langle M\rangle_t\right),
$$
where $B_n, B$ are standard Brownian motions, and the quadratic variations $\langle M_n\rangle_t$ and $\langle M\rangle_t$ of $M_n$ and $M$ respectively can be expressed explicitly as
$$
\langle M_n\rangle_t = \int_0^{t\wedge T^{(n)}_C}\si_n^2(X_n(u))\md u,\ \ \langle M\rangle_t = \int_0^{t\wedge T^{\ol{Z}}_C}\si^2(\ol{Z}(u))\md u.
$$
Note that $\si^2(\ol{Z}(u)) \le \ol{\si}^2_C$ for $u \in [0, T^{\ol{Z}}_C]$ (see Remark ~\ref{rmk:sigmas}). Therefore, $\langle M\rangle_t \le \ol{\si}^2_CT$ for $t \in [0, T]$, $n = 1, 2, \ldots$ We can conclude that the sequence $(X_{n_k}, B_{n_k})_{k \ge 1}$ is tight in $C([0, T], \BR)\times C([0, \ol{\si}^2_CT + 1], \BR)$. It has a weak limit point for some subsequence $(n'_k)$ of $(n_k)$:
$$
\left(X_{n'_k}, B_{n'_k}\right) \Ra (\ol{Z}, \ol{B}).
$$
The process $\ol{B}$ is a weak limit of a sequence of standard Brownian motions, and therefore is itself a standard Brownian motion. By the Skorohod representation theorem, we can switch from weak to pathwise convergence (a.s.) after changing the probability space. Since $X_{n'_k} \to \ol{Z}$, then by Lemma~\ref{lemma:not-far-into-negative} we have: $\ol{Z}(t) \ge 0$ for all $t \in [0, T]$. 

\begin{lemma} The hitting times of the level $C$ satisfy the following inequality a.s.:
$$
T^{\ol{Z}}_C \le \varliminf\limits_{k \to \infty}T^{(n'_k)}_C.
$$
\label{lemma:conv-of-hitting-moments}
\end{lemma}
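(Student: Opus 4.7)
The plan is to work pathwise on the almost-sure event $\Oa_0$ on which $X_{n'_k} \to \ol{Z}$ uniformly on $[0,T]$ (provided by the Skorohod representation). Fix $\oa \in \Oa_0$ and abbreviate $\tau_k := T^{(n'_k)}_C(\oa)$, $\tau_\infty := \varliminf_k \tau_k$. Passing to a further subsequence (depending on $\oa$, which is harmless since we are proving a pathwise inequality), assume $\tau_k \to \tau_\infty$.

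The argument then splits into two trivial cases. If $\tau_\infty = T$, the bound $T^{\ol{Z}}_C(\oa) \le T = \tau_\infty$ holds by the very definition of $T^{\ol{Z}}_C$, which caps the first hitting time of $C$ at $T$. If $\tau_\infty < T$, then $\tau_k < T$ for all sufficiently large $k$, and by continuity of $X_{n'_k}$ together with the definition of $T^{(n'_k)}_C$ we have $X_{n'_k}(\tau_k) = C$ for all such $k$. I then pass to the limit via the triangle inequality
$$
\bigl|X_{n'_k}(\tau_k) - \ol{Z}(\tau_\infty)\bigr| \;\le\; \sup_{t \in [0,T]} \bigl|X_{n'_k}(t) - \ol{Z}(t)\bigr| \;+\; \bigl|\ol{Z}(\tau_k) - \ol{Z}(\tau_\infty)\bigr|,
$$
where the first term vanishes by uniform convergence and the second by continuity of $\ol{Z}$ and $\tau_k \to \tau_\infty$. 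Hence $\ol{Z}(\tau_\infty) = C$, which yields $T^{\ol{Z}}_C \le \tau_\infty$.

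There is no real obstacle here beyond careful bookkeeping: the only place one could slip is in forgetting that $T^{(n'_k)}_C$ is capped at $T$, so one must separate out the case $\tau_\infty = T$ before claiming $X_{n'_k}(\tau_k) = C$. Everything else is lower semicontinuity of the first entrance time into the closed set $\{C\}$ under uniform convergence of continuous paths, applied path by path.
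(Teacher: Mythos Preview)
Your proof is correct and follows essentially the same approach as the paper's: both establish the standard lower semicontinuity of first hitting times under uniform convergence of continuous paths. The paper phrases it as a contradiction argument, picking a fixed $t_0 \in [0, T^{\ol{Z}}_C)$ and using that the stopped process satisfies $X^C_{\ol{n}_k}(t_0) = C$ whenever $T^{(\ol{n}_k)}_C \le t_0$, whereas you argue directly by evaluating at the moving times $\tau_k$; the content is the same.
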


\begin{proof} Indeed, assume the converse. Then there exists a subsequence $(\ol{n}_k)_{k \ge 1}$ of $(n'_k)_{k \ge 1}$ and a number $t_0 \in [0, T^{\ol{Z}}_C)$ such that $T^{(\ol{n}_k)}_C \le t_0$. But this means that 
$T^{(\ol{n}_k)}_C \le t_0 < T$, and 
$$
X^C_{\ol{n}_k}(t_0) = X^C_{\ol{n}_k}\left(T^{(\ol{n}_k)}_C\right) = C.
$$
Letting $k \to \infty$, we get: $\ol{Z}(t_0) = C$. But this contradicts the fact that $T^{\ol{Z}}_C > t_0$. This completes the proof of this lemma.
\end{proof}

By Lemma~\ref{lemma:aux-sigma}, $\si_{n'_k}(X_{n'_k}(t)) \to \si(\ol{Z}(t))$ a.s. uniformly on $[0, T]$. Hence a.s. uniformly on $[0, T]$, 
$$
\langle M_{n'_k}\rangle_t = \int_0^t\si^2_{n'_k}(X_{n'_k}(u))\md u \to 
\langle M\rangle_t = \int_0^t\si^2(\ol{Z}(u))\md u.
$$
Applying Lemma~\ref{lemma:aux} (iv), we get:
\begin{equation}
\label{eq:convergence-of-martingales}
M_{n'_k}(t) = B_{n'_k}(\langle M_{n'_k}\rangle_t) \to M(t) = B(\langle M\rangle_t)
\end{equation}
a.s. uniformly on $[0, T]$. Also, $g$ is continuous on $\BR$, and $X_{n'_k} \to \ol{Z}$ uniformly on $[0, T]$, hence $g(X_{n'_k}(t)) \to g(\ol{Z}(t))$ uniformly on $[0, T]$. Therefore, 
\begin{equation}
\label{eq:convergence-of-drift-parts}
\int_0^tg(X_{n'_k}(u))\md u \to \int_0^tg(\ol{Z}(u))\md u,\ n \to \infty,
\end{equation}
uniformly on $[0, T]$. Finally, $z_n \to z$. Combining this with~\eqref{eq:convergence-of-martingales} and~\eqref{eq:convergence-of-drift-parts}, we get: $Z_{n'_k} \to Z$ uniformly on $[0, T]$. Finally, weak convergence in $C[0, T]$ follows from uniform convergence. 


\subsection{Proof of Lemma~\ref{lemma:boundary-property-of-L}} This proof is a bit lengthy, so we organize it in the same way as the proof of Theorem~\ref{thm1}: In the first subsection, we split the proof into a few lemmata, and in the following subsections, we prove each of these lemmata. 

\subsubsection{Outline of the proof of Lemma~\ref{lemma:boundary-property-of-L}} Fix a subinterval $[t_1, t_2] \subseteq [0, T]$ and a number $\eta > 0$. 

\begin{lemma}
\label{lemma:prob-zero-for-C}
Suppose we have already shown that  
\begin{equation}
\label{eq:prob-zero-for-C}
\MP\bigl(|\ol{L}(t_2) - \ol{L}(t_1)| > \eta,\ \min\limits_{t \in [t_1, t_2]}\ol{Z}(t) > \eta\bigr) = 0.
\end{equation}
Then the statement of Lemma~\ref{lemma:boundary-property-of-L} is true. 
\end{lemma}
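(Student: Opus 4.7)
This lemma is a reduction: granted~\eqref{eq:prob-zero-for-C} for every fixed triple $(t_1, t_2, \eta)$, I need to deduce both assertions of Lemma~\ref{lemma:boundary-property-of-L}, namely that $\ol{L}$ is nondecreasing on $[0, T^{\ol{Z}}_C]$ and that it can increase only when $\ol{Z} = 0$. The plan is the standard countable-union-plus-continuity device, followed by letting the threshold $\eta \downarrow 0$, and finally a pathwise Skorohod-reflection identification to upgrade local constancy to monotonicity.

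First, I would apply~\eqref{eq:prob-zero-for-C} to each triple $(t_1, t_2, 1/m)$ with rational $0 \le t_1 < t_2 \le T$ and $m \in \{1, 2, \ldots\}$. Since a countable union of null events is null, a.s., for all such triples simultaneously, $\min_{[t_1, t_2]}\ol{Z} > 1/m$ forces $|\ol{L}(t_2) - \ol{L}(t_1)| \le 1/m$. Continuity of the sample paths of $\ol{Z}$ and $\ol{L}$ on $[0, T]$ then lets us approximate arbitrary real $0 \le t_1 < t_2 \le T$ by rationals and arbitrary $\eta > 0$ by some $1/m$ from below, upgrading the estimate to: a.s., whenever $\min_{[t_1, t_2]}\ol{Z} > \eta$, we have $|\ol{L}(t_2) - \ol{L}(t_1)| \le \eta$.

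From this I would extract local constancy of $\ol{L}$ on positive excursions of $\ol{Z}$: if $[a, b] \subseteq [0, T^{\ol{Z}}_C]$ satisfies $\inf_{[a, b]}\ol{Z} > 0$, then choosing any $\eta$ strictly below this infimum gives $|\ol{L}(t_1) - \ol{L}(t_2)| \le \eta$ for all $t_1, t_2 \in [a, b]$, and letting $\eta \downarrow 0$ collapses $\ol{L}|_{[a, b]}$ to a constant. Hence $\ol{L}$ is constant on every connected component of the open set $\{t \in [0, T^{\ol{Z}}_C] : \ol{Z}(t) > 0\}$; equivalently, any growth of $\ol{L}$ is concentrated on $\{\ol{Z} = 0\}$, which is the second assertion of Lemma~\ref{lemma:boundary-property-of-L}.

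The main obstacle is the monotonicity assertion, because~\eqref{eq:prob-zero-for-C} is two-sided and local constancy on $\{\ol{Z} > 0\}$ does not by itself prevent $\ol{L}$ from oscillating on the closed set $\{\ol{Z} = 0\}$. To close this gap I would introduce the Skorohod regulator $\xi(t) := \sup_{0 \le s \le t}(-\tilde{Z}(s))^+$ of $\tilde{Z}$; this is continuous, nondecreasing, vanishes at $t = 0$ (since $\tilde{Z}(0) = z_0 \ge 0$), and is the unique continuous nondecreasing $K$ with $K(0) = 0$ such that $\tilde{Z} + K \ge 0$ and $K$ grows only where $\tilde{Z} + K = 0$. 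On $\{\ol{Z} = 0\}$ one has $\ol{L} = -\tilde{Z}$, while $\xi = -\tilde{Z}$ precisely at the running-minimum points of $\tilde{Z}$. A pathwise comparison, using continuity and the constancy of $\ol{L}$ across positive excursions, identifies the connected components of $\{\ol{Z} > 0\}$ with the maximal intervals between consecutive new running minima of $\tilde{Z}$, and forces $\ol{L} = \xi$ throughout $[0, T^{\ol{Z}}_C]$; monotonicity is then inherited from $\xi$. The delicate technical point is precisely this last identification, which extracts a sign from a two-sided estimate by appealing to the pathwise structure of the Skorohod problem.
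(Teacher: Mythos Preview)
Your treatment of the ``increases only at $\ol{Z}=0$'' part is fine and matches the paper's countable-rational-union argument. The gap is in the monotonicity part.

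Your plan is to deduce $\ol{L}=\xi$ (the Skorohod regulator of $\tilde Z$) from the pathwise constraints you have in hand: $\ol{Z}=\tilde Z+\ol{L}\ge 0$, $\ol{L}(0)=0$, $\ol{L}$ continuous, and $\ol{L}$ constant on each component of $\{\ol Z>0\}$. But these constraints do \emph{not} force $\ol{L}=\xi$; the Skorohod uniqueness theorem requires the regulator to be nondecreasing, which is exactly what you are trying to prove. For a concrete counterexample, take any continuous $\tilde Z$ with $\tilde Z(0)>0$ that dips below zero and then returns above zero, and set $\ol{L}(t):=(-\tilde Z(t))^{+}$. Then $\ol Z=\tilde Z^{+}\ge 0$, $\ol L(0)=0$, $\ol L$ is continuous, and on $\{\ol Z>0\}=\{\tilde Z>0\}$ we have $\ol L\equiv 0$, hence constant. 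Yet $\ol L$ is not monotone, and the components of $\{\ol Z>0\}$ do \emph{not} coincide with the intervals between new running minima of $\tilde Z$. So the ``pathwise comparison'' you allude to cannot succeed without further input; the two-sided hypothesis~\eqref{eq:prob-zero-for-C} alone carries no sign information.

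The paper obtains monotonicity by a different mechanism that you have not used: it goes back to the pre-limit processes and invokes condition~(v) of Theorem~\ref{thm1}. Near a time $t_0$ with $\ol Z(t_0)=0$, one has $X^{C}_{n'_k}(t)\in(-\de_0,\de_0)$ for large $k$, and condition~(v) gives $f_{n'_k}(x)-g(x)>-\de$ there, so
\[
L_{n'_k}(t_2)-L_{n'_k}(t_1)=\int_{t_1}^{t_2}\bigl[f_{n'_k}(X^{C}_{n'_k})-g(X^{C}_{n'_k})\bigr]\,\md t\ \ge\ -\de(t_2-t_1),
\]
and passing to the limit yields $\ol L(t_2)-\ol L(t_1)\ge -\de(t_2-t_1)$ locally around $t_0$. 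Combined with the already-proved local constancy on $\{\ol Z>0\}$, this holds for every $t_0$, and letting $\de\downarrow 0$ gives monotonicity. In short, the missing ingredient in your argument is precisely the one-sided estimate supplied by condition~(v), fed through the approximating sequence; monotonicity of $\ol L$ is not a consequence of~\eqref{eq:prob-zero-for-C} and the limiting pathwise constraints alone.
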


Consider the subset 
$$
\CG := \{(x, y) \in C[0, T]\times C[0, T]\mid y(t_2) > y(t_1) + \eta,\ \min_{[t_1, t_2]}x(t) > \eta\}.
$$
It is open in $C[0, T]\times C[0, T]$. Suppose we proved that
\begin{equation}
\label{eq:prob-tending-to-zero}
\lim\limits_{k \to \infty}\MP\left(\left(X^C_{n'_k}, L_{n'_k}\right) \in \CG\right) = 0.
\end{equation}
Then by the portmanteau theorem, $\MP\left((\ol{Z}, \ol{L}) \in \CG\right) = 0$, which is equivalent to~\eqref{eq:prob-zero-for-C}. Therefore, it suffices to show~\eqref{eq:prob-tending-to-zero} to prove~\eqref{eq:prob-zero-for-C}. Define $s^{-1}$ to be the inverse function of $s$, with the domain $(s(-\infty), s(\infty))$. Let 
\begin{equation}
\label{eq:def-en}
\eps_n := \max\limits_{x \in [\eta, C]}\left|s^{-1}(s_n(x)) - x\right|.
\end{equation}
Then $\eps_n \to 0$: indeed, $s_n \to s$ uniformly on $[\eta, C]$, and by Lemma~\ref{lemma:aux} we get: $s^{-1}(s_n(x)) \to s^{-1}(s(x)) = x$ uniformly on $[\eta, C]$. Define also 
\begin{equation}
\label{eq:def-of-Yn}
Y_n(t) \equiv s^{-1}(s_n(X^C_n(t))) - Z_n(t\wedge T^{(n)}_C).
\end{equation}
The next two lemmata form the crux of the proof of Lemma~\ref{lemma:boundary-property-of-L}.

\begin{lemma}
\label{lemma:crucial-calculations}
For large enough $n$, the process $Y_n$ is well defined, and for $t < T^{(n)}_C$, we have: 
\begin{equation}
\label{eq:formula-Yn}
\md Y_n(t) = \be_n(X^C_n(t))\md t + \al_n(X^C_n(t))\md W(t),
\end{equation}
where the drift and diffusion coefficients $\be_n(\cdot)$, $\al_n(\cdot)$ are given by
$$
\al_n(x) := \frac{s'_n(x)\si_n(x)}{s'(s^{-1}(s_n(x)))} - \si_n(x),
$$
$$
\be_n(x) = \frac{s'^2_n(x)\si_n^2(x)}{\si^2(s^{-1}(s_n(x)))s'^2(s^{-1}(s_n(x)))}g(s^{-1}(s_n(x))) - g(x).
$$
\end{lemma}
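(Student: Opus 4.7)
The plan is to compute $\md Y_n$ directly by applying Itô's formula twice---once to obtain the dynamics of $s_n(X_n^C)$ and once more to push the result through $s^{-1}$---and then to subtract the dynamics of $Z_n$ from~\eqref{eq:Zn}. The scale function identity is what makes the drift of $s_n(X_n)$ vanish, leaving only a Brownian integral; after composing with $s^{-1}$ the drift that reappears will be precisely the combination seen in $\be_n$.

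As a preliminary step, I would verify that $Y_n$ is well defined for large $n$. Since $g$ and $\si$ are extended as the constants $g(0), \si(0)$ on $(-\infty, 0]$, the limiting function $s$ is a strictly increasing $C^1$-map on $\BR$ with image some open interval $s(\BR)$, and $s^{-1}\colon s(\BR)\to\BR$ is a $C^1$-bijection. Combining this with the uniform convergence $s_n\to s$ on compact subsets of $(0,\infty)$ from Lemma~\ref{lemma:conv-of-scale-functions} and with Lemma~\ref{lemma:not-far-into-negative}, one verifies that on a high-probability event the values $s_n(X_n^C(t))$ stay inside $s(\BR)$ for all $t\in[0,T]$, so that $s^{-1}(s_n(X_n^C(t)))$ makes sense.

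For the main computation, $s_n$ is $C^1$ with absolutely continuous derivative satisfying $s_n''(x) = -2f_n(x)\si_n^{-2}(x)s_n'(x)$ for a.e.\ $x$, so the generalized Itô formula applied to $s_n(X_n(t))$ on $\{t<T_C^{(n)}\}$ makes the drift cancel identically, yielding $\md s_n(X_n(t)) = s_n'(X_n(t))\si_n(X_n(t))\md W(t)$. Similarly, $s$ is $C^1$ with absolutely continuous derivative, so Itô applies again to $F=s^{-1}$; using $(s^{-1})'(u) = 1/s'(s^{-1}(u))$ together with the identity $(s^{-1})''(u) = 2g(s^{-1}(u))/[\si^2(s^{-1}(u))(s'(s^{-1}(u)))^2]$ (which follows from $s''=-2g\si^{-2}s'$), I obtain
$$
\md s^{-1}(s_n(X_n)) = \frac{s_n'(X_n)\si_n(X_n)}{s'(s^{-1}(s_n(X_n)))}\md W + \frac{s_n'^2(X_n)\si_n^2(X_n)\,g(s^{-1}(s_n(X_n)))}{\si^2(s^{-1}(s_n(X_n)))\,s'^2(s^{-1}(s_n(X_n)))}\md t.
$$
Subtracting $\md Z_n$ from~\eqref{eq:Zn} removes the $+\si_n(X_n)\md W$ and $+g(X_n^C)\md t$ contributions, leaving exactly $\al_n(X_n^C)\md W + \be_n(X_n^C)\md t$ as claimed.

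The main obstacle I expect is the rigorous justification of Itô's formula under only the measurability of $f_n, \si_n$ granted by Assumption 4: the classical $C^2$-version does not apply directly. However, $s_n'$ is continuous and locally absolutely continuous (being the exponential of a locally absolutely continuous function), so the F\"ollmer--Bouleau--Yor generalization of Itô's formula suffices for the first step; the analogous regularity for $s$ (from local integrability of $g/\si^2$) handles the second step.
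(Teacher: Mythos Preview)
Your approach is essentially the same as the paper's: apply It\^o's formula to obtain the driftless dynamics $\md s_n(X_n) = s'_n(X_n)\si_n(X_n)\,\md W$, compute $(s^{-1})'$ and $(s^{-1})''$ explicitly from $s'' = -2g\si^{-2}s'$, push through $s^{-1}$ by a second application of It\^o, and subtract $\md Z_n$. One minor difference: the paper establishes well-definedness purely deterministically---for any $[x_1,x_2]\subseteq(0,\infty)$ and large $n$, one has $[s_n(x_1),s_n(x_2)]\subseteq(s(-\infty),s(\infty))$ by uniform convergence $s_n\to s$---without invoking Lemma~\ref{lemma:not-far-into-negative}; this suffices because $Y_n$ is only ever used on the event $\{\min X_n^C > \eta\}$. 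Your extra attention to the regularity needed for It\^o's formula (only measurability of $f_n,\si_n$ is assumed) is a point the paper glosses over.
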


\begin{lemma}
\label{lemma:properties-of-coefficients}
For every interval $[x_1, x_2] \subseteq (0, \infty)$, we have: $\al_n, \be_n \to 0$ uniformly on $[x_1, x_2]$. 
\end{lemma}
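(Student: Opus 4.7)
The plan is to reduce the uniform convergence of $\al_n$ and $\be_n$ to uniform convergence statements we already have in hand, chiefly $s_n \to s$, $s'_n \to s'$ on compact subsets of $(0,\infty)$ (Lemma~\ref{lemma:conv-of-scale-functions}) and $\si_n \to \si$ on $[0,x]$ for every $x>0$ (Condition (iii)). The overall idea is that both $\al_n$ and $\be_n$ are built by substituting the quantities $s_n, s'_n, \si_n$ into continuous expressions involving $s^{-1}, s', \si, g$, where the limiting value of the expression is zero by direct algebraic cancellation.

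First I would establish a bookkeeping step: since $s$ is strictly increasing and continuously differentiable on $\BR$ with $s'>0$, the inverse $s^{-1}$ is continuous (hence uniformly continuous on compact subsets of its domain). Fix $[x_1,x_2]\subseteq (0,\infty)$. Because $s_n \to s$ uniformly on $[x_1,x_2]$, the values $s_n(x)$ eventually lie in a fixed compact subset of $(s(-\infty),s(\infty))$, and uniform continuity of $s^{-1}$ there gives $s^{-1}(s_n(x)) \to s^{-1}(s(x)) = x$ uniformly on $[x_1,x_2]$. In particular, for large $n$ the composition $s^{-1}(s_n(x))$ stays in a compact interval $[x_1/2, 2x_2]\subseteq (0,\infty)$ on which $s', \si$ are continuous and bounded away from zero, so every denominator appearing in the formulas for $\al_n$ and $\be_n$ is bounded away from zero uniformly in $n$ and $x$.

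For $\al_n$, I would write
\[
\al_n(x) = \si_n(x)\left[\frac{s'_n(x)}{s'(s^{-1}(s_n(x)))} - 1\right].
\]
Uniform convergence $s^{-1}(s_n(x))\to x$ on $[x_1,x_2]$, combined with uniform continuity of $s'$ on the compact set $[x_1/2, 2x_2]$, gives $s'(s^{-1}(s_n(x))) \to s'(x)$ uniformly. Together with $s'_n \to s'$ uniformly on $[x_1,x_2]$ and the uniform lower bound on $s'$, the bracket tends to $0$ uniformly; since $\si_n$ is bounded on $[x_1,x_2]$ (Remark~\ref{rmk:sigmas}), we conclude $\al_n \to 0$ uniformly on $[x_1,x_2]$.

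For $\be_n$ the argument is structurally the same but with more factors. I would rewrite
\[
\be_n(x) = \left[\frac{s'^2_n(x)\si_n^2(x)}{\si^2(s^{-1}(s_n(x)))\,s'^2(s^{-1}(s_n(x)))} - 1\right]g(s^{-1}(s_n(x))) + \bigl[g(s^{-1}(s_n(x))) - g(x)\bigr].
\]
Applying the same composition argument to each of $\si$, $s'$, and $g$ (each uniformly continuous on the compact interval $[x_1/2,2x_2]$) shows that both bracketed terms tend to $0$ uniformly on $[x_1,x_2]$; the ratio in the first bracket is uniformly bounded because every denominator is bounded away from zero, and $g$ is bounded on $[x_1/2,2x_2]$. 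The main, and only, obstacle is organizing the composition argument so that each uniform convergence assertion on $(0,\infty)$ is transferred correctly through $s^{-1}$ to the moving argument $s^{-1}(s_n(x))$; once one fixes the enlarged compact interval $[x_1/2,2x_2]$ and uses that $s^{-1}(s_n(x))$ eventually lies inside it, everything reduces to combining uniform convergence with uniform continuity on compact sets, which is routine.
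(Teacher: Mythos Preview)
Your proposal is correct and follows essentially the same approach as the paper: both arguments hinge on showing $s^{-1}(s_n(x)) \to x$ uniformly on $[x_1,x_2]$ and then pushing this through continuous compositions with $s'$, $\si$, and $g$ on a slightly enlarged compact interval. The only cosmetic difference is that the paper packages the composition steps into repeated applications of an auxiliary lemma (Lemma~\ref{lemma:aux}) and, for $\be_n$, uses the rewriting $\be_n(x) = \frac{(\al_n(x)+\si_n(x))^2}{\si^2(s^{-1}(s_n(x)))}g(s^{-1}(s_n(x))) - g(x)$ to recycle the already-proved $\al_n \to 0$, whereas you add and subtract $g(s^{-1}(s_n(x)))$ directly; both decompositions lead to the same conclusion by the same mechanism.
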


From~\eqref{eq:def-of-Yn} and the definition~\eqref{eq:Zn} of $Z_n$, we get:
\begin{equation}
\label{eq:another-form-of-Y}
Y_n(t) \equiv s^{-1}(s_n(X^C_n(t))) - X_n^C(t) + L_n(t).
\end{equation}
From the definition~\eqref{eq:def-en} of $\eps_n$, we get:
\begin{equation}
\label{eq:special-inclusion}
\left\{\min\limits_{[t_1, t_2]}X^C_n(t) > \eta\right\} \subseteq \left\{\max\limits_{t_1 \le t \le t_2}|s^{-1}(s_n(X^C_n(t))) - X_n^C(t)| \le \eps_n\right\}.
\end{equation}
Combining~\eqref{eq:another-form-of-Y} and~\eqref{eq:special-inclusion}, we have:
\begin{equation}
\label{eq:comparison-of-events}
\left\{\min\limits_{[t_1, t_2]}X^C_n(t) > \eta\right\} \subseteq 
\left\{\max\limits_{t_1 \le t \le t_2}|Y_n(t) - L_n(t)| \le \eps_n\right\}.
\end{equation}

\begin{lemma}
\label{lemma:final}
Fix $\eps > 0$. We introduce the following event:
$$
A_n(\eps) := \left\{\exists\ t'_1, t'_2 \in [t_1, t_2] \mid |Y_n(t'_2) - Y_n(t'_1)| \ge \eps,\ \min\limits_{[t_1, t_2]}X^C_n > \eta\right\}.
$$
Then $\MP\left(A_n(\eps)\right) \to 0$ as $n \to \infty$. 
\end{lemma}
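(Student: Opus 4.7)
The plan is to exploit the SDE representation~\eqref{eq:formula-Yn} together with Lemma~\ref{lemma:properties-of-coefficients}: on the event $\{\min_{[t_1,t_2]} X_n^C > \eta\}$ the trajectory $X_n^C$ stays in the compact interval $[\eta, C]\subset (0,\infty)$, so the random coefficients $\alpha_n(X_n^C(\cdot))$ and $\beta_n(X_n^C(\cdot))$ driving $Y_n$ are uniformly small. Consequently, the oscillation of $Y_n$ over $[t_1, t_2]$ should vanish in probability, immediately implying the claim.

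To convert this heuristic into a rigorous bound without conditioning on a pathwise event, I would introduce the stopping time
$$
\rho_n := \inf\{t \ge t_1 \mid X_n^C(t) \le \eta\}\wedge t_2
$$
and the stopped process $\tilde Y_n(t) := Y_n(t\wedge \rho_n)$. The key observation is that $A_n(\eps) \subseteq \{\rho_n = t_2\}$, so on $A_n(\eps)$ we have $\tilde Y_n \equiv Y_n$ on $[t_1, t_2]$; at the same time, for \emph{every} $\omega$, on $[t_1\wedge T_C^{(n)},\,\rho_n\wedge T_C^{(n)}]$ the stopped process $X_n^C$ takes values in $[\eta, C]$ by construction. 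Set $M_n := \sup_{x\in[\eta,C]}(|\alpha_n(x)| + |\beta_n(x)|)$, so that $M_n \to 0$ by Lemma~\ref{lemma:properties-of-coefficients}.

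Using~\eqref{eq:formula-Yn} together with the fact that $Y_n$ is constant after $T_C^{(n)}$, I would write, for $t \in [t_1, t_2]$,
$$
\tilde Y_n(t) - Y_n(t_1) = \int_{t_1\wedge T_C^{(n)}}^{t\wedge\rho_n\wedge T_C^{(n)}}\beta_n(X_n^C(u))\,\md u + \int_{t_1\wedge T_C^{(n)}}^{t\wedge\rho_n\wedge T_C^{(n)}}\alpha_n(X_n^C(u))\,\md W(u).
$$
The drift part is pathwise bounded by $M_n T$; the integrand of the martingale part is pathwise bounded by $M_n$, so Doob's $L^2$-inequality gives
$$
\EE\left[\sup_{t \in [t_1, t_2]}\left|\int_{t_1\wedge T_C^{(n)}}^{t\wedge\rho_n\wedge T_C^{(n)}}\alpha_n(X_n^C(u))\,\md W(u)\right|^2\right]\le 4M_n^2T.
$$
Combining with Chebyshev's inequality yields $\MP\bigl(\sup_{t\in[t_1,t_2]}|\tilde Y_n(t) - Y_n(t_1)| \ge \eps/2\bigr) \to 0$.

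On $A_n(\eps)$, the existence of $t'_1, t'_2$ with $|Y_n(t'_2) - Y_n(t'_1)| \ge \eps$ forces $\sup_{t\in[t_1,t_2]}|Y_n(t) - Y_n(t_1)| \ge \eps/2$ by the triangle inequality, and since $Y_n = \tilde Y_n$ on $[t_1, t_2]$ throughout $A_n(\eps)$, the event $A_n(\eps)$ sits inside the event just controlled, giving $\MP(A_n(\eps))\to 0$. The only delicate point is orchestrating the three stopping times $t_2$, $\rho_n$, and $T_C^{(n)}$ so that $X_n^C$ is guaranteed to lie in $[\eta, C]$ throughout the integration range and the coefficients are truly bounded by $M_n$; once that is in place, the rest is a routine second-moment computation.
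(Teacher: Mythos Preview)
Your argument is correct and rests on the same core observation as the paper's: on the event of interest, $X_n^C$ is trapped in the compact interval $[\eta,C]\subset(0,\infty)$, where by Lemma~\ref{lemma:properties-of-coefficients} the coefficients $\alpha_n,\beta_n$ in the representation~\eqref{eq:formula-Yn} are uniformly small. The technical execution, however, differs. The paper handles the drift exactly as you do (bounded by $\eps/2$ for large $n$), but for the martingale part it invokes the bespoke oscillation bound Lemma~\ref{lemma:main-aux} from the Appendix, applied to $M_n(t)=\int_0^t\alpha_n(X_n(u))\,\md W(u)$ with $K=\max_{[\eta,C]}|\alpha_n|$ and $\delta=T$. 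You instead introduce the stopping time $\rho_n$, localize explicitly, and finish with Doob's $L^2$ inequality plus Chebyshev. Your route is more elementary---it bypasses Lemma~\ref{lemma:main-aux} entirely---and it makes the localization fully rigorous: because $\alpha_n(x)$ involves $s^{-1}(s_n(x))$, the integrand of $M_n$ is a priori only defined and bounded when $X_n\in[\eta,C]$, a point the paper's proof leaves implicit but which your stopping-time construction handles cleanly. The trade-off is that the paper's approach, once Lemma~\ref{lemma:main-aux} is available (it is also used in the proof of Lemma~\ref{lemma:Xn-tight}), is a one-line invocation.
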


Now, we can complete the proof of~\eqref{eq:prob-tending-to-zero}. Assume the following event has happened:
$$
A_n := \left\{\left|L_n(t_2) - L_n(t_1)\right| > \eta,\ \min\limits_{[t_1, t_2]}X^C_n > \eta\right\}.
$$
Since $\eps_n \to 0$, there exists $n_1$ such that for $n \ge n_1$, we have: $\eps_n < \eta/3$. From~\eqref{eq:comparison-of-events}, we get that, for $n \ge n_1$, the following event also has happened:
$$
A'_n := \left\{\left|Y_n(t_2) - Y_n(t_1)\right| > \eta - 2\eps_n > \eta/3,\ \ \min\limits_{[t_1, t_2]}X^C_n > \eta\right\}.
$$
Apply Lemma~\ref{lemma:final} for $\eps = \eta/3$ and get that $\MP(A'_n) \to 0$. Therefore, $\MP(A_n) \to 0$, which completes the proof of~\eqref{eq:prob-tending-to-zero}, as well as the proof of Lemma~\ref{lemma:boundary-property-of-L}. 

\subsubsection{Proof of Lemma~\ref{lemma:prob-zero-for-C}} 

First, let us show $\ol{L}$ cannot increase when $\ol{Z} > 0$. Assume the converse; then there exists a subinterval $[t_1, t_2] \subseteq [0, T]$ such that $\ol{L}(t_2) > \ol{L}(t_1)$, and $\ol{Z}(t) > 0$ for $t \in [t_1, t_2]$. Since the function $\ol{L}$ is continuous, there exist rational $q_1, q_2 \in (t_1, t_2)$ such that $q_1 < q_2$ and $\ol{L}(q_2) > \ol{L}(q_1)$. Find a rational $q > 0$ small enough so that $\ol{L}(q_2) > \ol{L}(q_1) + q$, and $\min_{[t_1, t_2]}\ol{Z} > q$. 
Let 
$$
F(q_1, q_2, q) := \left\{\ol{L}(t_2) > \ol{L}(t_1) + q,\ \min\limits_{[t_1, t_2]}\ol{Z} > q\right\}.
$$
Then 
$$
\left\{\int_0^T\ol{Z}(t)\md\ol{L}(t) > 0\right\} = \bigcup F(q_1, q_2, q),
$$
where the union is taken over all rational $q, q_1, q_2$ such that $0 \le q_1 < q_2 \le T$, and $q > 0$. By the assumption of the lemma, $\MP(F(q_1, q_2, q)) = 0$. Observe that this union is countable to complete the proof. 

\smallskip

Next, let us show $\ol{L}$ is a.s. nondecreasing. It suffices to prove that for every $\de > 0$, for all $t_1, t_2 \in [0, T]$ such that $t_1 < t_2 \le T^{\ol{Z}}_C$, we have: 
\begin{equation}
\label{eq:not-quite-increase}
\ol{L}(t_2) - \ol{L}(t_1) \ge -\de(t_2 - t_1).
\end{equation}
Then, in turn, it suffices to show that for every $t_0 \in (0, T^{\ol{Z}}_C)$, there exists a neighborhood $(t_0 - \eps, t_0 + \eps) \subseteq [0, T]$ such that the property~\eqref{eq:not-quite-increase} is satisfied in this neighborhood. Indeed, if we proved this, then the standard compactness argument gives us~\eqref{eq:not-quite-increase} on the whole $[0, T]$. Assume $\ol{Z}(t_0) > 0$. By continuity of $\ol{Z}$, there exists a neighborhood of $t_0$ in which $\ol{Z} > 0$ and therefore, by Lemma~\ref{lemma:boundary-property-of-L}, the process $\ol{L}$ is constant. Then it obviously satisfies~\eqref{eq:not-quite-increase} in this neighborhood. Now, assume $\ol{Z}(t_0) = 0$. From condition (iii) of Theorem~\ref{thm1}, we have: there exist $\de_0 > 0$ and $n_0$ such that for $x \in (-\de_0, \de_0)$ and $n \ge n_0$, we get: $f_n(x) \ge g(0) - \de$. By continuity of $\ol{Z}$, there exists $\eps > 0$ such that $\ol{Z}(t) < \de_0/2$ for $t \in (t_0 - \eps, t_0+\eps)$. By the uniform convergence 
$X^C_{n'_k} \to \ol{Z}$ on $(t_0 - \eps, t_0 + \eps)$, there exists $k_0$ such that for $k \ge k_1$ we get: 
$$
-\de_0 < X^C_{n'_k}(t) < \de_0, \ \ t \in (t_0 - \eps, t_0 + \eps).
$$
We can find $k_2$ large enough so that for $k \ge k_2$, we have: $n'_k \ge n_0$. Then we have: for $k \ge k_1\vee k_2$, 
$$
f_{n'_k}\left(X^C_{n'_k}(t)\right) - g\left(X^C_{n'_k}(t)\right) > -\de,
$$
and for $t_1, t_2 \in (t_0 - \eps, t_0 + \eps),\ t_1 < t_2$, we get:
$$
\ol{L}_{n'_k}(t_2) - \ol{L}_{n'_k}(t_1) = \int_{t_1}^{t_2}
\left[f_{n'_k}\left(X^C_{n'_k}(t)\right) - g\left(X^C_{n'_k}(t)\right)\right]\md t \ge -\de(t_2 - t_1).
$$
Letting $k \to \infty$, we get~\eqref{eq:not-quite-increase}. The proof is complete.  

\subsubsection{Proof of Lemma~\ref{lemma:crucial-calculations}}

First, let us show that the process $Y_n$ is well defined for sufficiently large $n$: that is, for every $[x_1, x_2] \subseteq (0, \infty)$ there exists $n_0$ such that for $n \ge n_0$, the domain $(s(-\infty), s(\infty))$ of $s^{-1}$ contains $[s_n(x_1), s_n(x_2)]$. Indeed, there exists $\eps > 0$ such that $[s(x_1) - \eps, s(x_2) + \eps] \subseteq (s(-\infty), s(\infty))$, because $s'(x) > 0$ for all $x \in \BR$. Moreover, there exists $n_0$ such that for $n \ge n_0$ we have:
$$
s_n(x_1) > s(x_1) - \eps,\ \ s_n(x_2) < s(x_2) + \eps.
$$
Therefore, for $n \ge n_0$ we have: $[s_n(x_1), s_n(x_2)] 
\subseteq (s(-\infty), s(\infty))$. This proves that $Y_n$ is well defined for $n \ge n_0$. 

\medskip

Let us show the representation~\eqref{eq:formula-Yn} for $\md Y_n(t)$. For all $x \in \BR$ and $n = 1, 2, \ldots$, we have:
\begin{equation}
\label{eq:s'}
s'_n(x) = \exp\left(-2\int_{b_n}^x\frac{f_n(z)}{\si_n^2(z)}\md z\right),\ \ 
s'(x) = \exp\left(-2\int_0^x\frac{g(z)}{\si^2(z)}\md z\right).
\end{equation}
Therefore, for $y \in (s(-\infty), s(\infty))$, 
$$
\left(s^{-1}\right)'(y) = \frac1{s'(s^{-1}(y))} = \exp\left(2\int_{0}^{s^{-1}(y)}\frac{g(z)}{\si^2(z)}\md z\right)
$$
and 
\begin{align*}
\left(s^{-1}\right)''(y) = \exp&\left(2\int_0^{s^{-1}(y)}\frac{g(z)}{\si^2(z)}\md z\right)\cdot 2\frac{g(s^{-1}(y))}{\si^2(s^{-1}(y))}\cdot(s^{-1}(y))' \\ & = \exp\left(4\int_0^{s^{-1}(y)}\frac{g(z)}{\si^2(z)}\md z\right)\cdot 2\frac{g(s^{-1}(y))}{\si^2(s^{-1}(y))}.
\end{align*}
By It\^o's formula, applied to the process $s_n(X^C_n(\cdot))$ from~\eqref{eq:SDE-for-s_n(X_n)}, 
\begin{align*}
\md s^{-1}(s_n(X_n^C(t)))  & = \left(s^{-1}\right)'(s_n(X_n^C(t)))s'_n(X_n^C(t))\si_n(X_n^C(t))\md W(t) \\ & + \frac12\left(s^{-1}\right)''(s_n(X_n^C(t)))\left[s'_n(X_n^C(t))\si_n(X^C_n(t))\right]^2\md t \\ & = \exp\left(2\int_0^{s^{-1}(s_n(X_n^C(t)))}\frac{g(z)}{\si^2(z)}\md z - 2\int_{b_n}^{X_n^C(t)}\frac{f_n(z)}{\si_n^2(z)}\md z\right)\si_n(X_n^C(t))\md W(t) \\ & + \frac12\exp\left(4\int_0^{s^{-1}(s_n(X_n^C(t)))}\frac{g(z)}{\si^2(z)}\md z\right)2\frac{g(s^{-1}(s_n(X_n^C(t))))}{\si^2(s^{-1}(s_n(X_n^C(t))))}\times\\ & \times \si_n^2(X_n^C(t))\exp\left(-4\int_{b_n}^{X_n^C(t)}\frac{f_n(z)}{\si_n^2(z)}\md z\right)\md t
\end{align*}
Comparing with~\eqref{eq:s'}, we get:
\begin{align*}
 \md &s^{-1}(s_n(X_n^C(t)))  = \frac{s'_n(X_n^C(t))\si_n(X_n^C(t))}{s'(s^{-1}(s_n(X_n^C(t))))}\md W(t) \\ & + 
\frac{s'^2_n(X_n^C(t))\si_n^2(X_n^C(t))}{\si^2(s^{-1}(s_n(X_n^C(t))))s'^2(s^{-1}(s_n(X_n^C(t))))}g(s^{-1}(s_n(X_n^C(t))))\md t. 
\end{align*} 
The rest of the proof follows trivially from this latter formula and~\eqref{eq:def-of-Yn}. 

\subsubsection{Proof of Lemma~\ref{lemma:properties-of-coefficients}}

First, let us show that $\al_n \to 0$ uniformly on $[x_1, x_2]$. 
Because $\si_n \to \si$ uniformly on $[x_1, x_2]$, we need only to prove that uniformly on $[x_1, x_2]$,
\begin{equation}
\label{eq:conv-additional}
\frac{s'_n(x)}{s'(s^{-1}(s_n(x)))} \to 1,
\end{equation}
and then to apply Lemma~\ref{lemma:aux} (i). Now, let us show~\eqref{eq:conv-additional}. First, $s_n \to s$ uniformly on $[x_1, x_2]$. Since $s^{-1}$ is continuous in a neighborhood $(s(-\infty), s(\infty))$ of $[s(x_1), s(x_2)]$, 
by Lemma~\ref{lemma:aux} (iii), $s^{-1}(s_n(x)) \to s^{-1}(s(x)) \equiv x$ uniformly on $[x_1, x_2]$. Applying Lemma~\ref{lemma:aux} (iii) again, we get: $s'(s^{-1}(s_n(x))) \to s'(x)$ uniformly on $[x_1, x_2]$. By Lemma~\ref{lemma:aux} (ii), uniformly on $[x_1, x_2]$ we have:
$$
\frac1{ s'(s^{-1}(s_n(x)))} \to \frac1{s'(x)}.
$$
Finally, $s'_n \to s'$ uniformly on $[x_1, x_2]$. Applying Lemma~\ref{lemma:aux} (i) again, we get~\eqref{eq:conv-additional}. This completes the proof that $\al_n \to 0$ uniformly on $[x_1, x_2]$. 

\medskip

Now, let us show $\be_n \to 0$ uniformly on $[x_1, x_2]$. Earlier, we showed $s^{-1}(s_n(x)) \to x$ uniformly on $[x_1, x_2]$. Since $g$ and $\si$ are continuous, by Lemma~\ref{lemma:aux} (iii), uniformly on $[x_1, x_2]$, we have:
$$
g(s^{-1}(s_n(x))) \to g(x),\ \ \si(s^{-1}(s_n(x))) \to \si(x).
$$
By Lemma~\ref{lemma:aux} (ii), uniformly on $[x_1, x_2]$, 
$$
\frac1{\si^2(s^{-1}(s_n(x)))} \to \frac1{\si^2(x)}.
$$
Next, we can represent the coefficient $\be_n(x)$ as
$$
\be_n(x) = \frac{(\al_n(x) + \si_n(x))^2}{\si^2(s^{-1}(s_n(x)))}g(s^{-1}(s_n(x))) - g(x).
$$
Finally, we use the already proven fact that $\al_n \to \al$ uniformly on $[x_1, x_2]$. The rest of the proof consists of a few more applications of Lemma~\ref{lemma:aux}. 

\subsubsection{Proof of Lemma~\ref{lemma:final}}

Assume the event $A_n(\eps)$ has happened. We have: $t'_1 < T^{(n)}_C$, because otherwise $Y_n$ would be constant on $[t'_1, t'_2]$. Without loss of generality, we can assume $t'_2 \le T^{(n)}_C$ (otherwise, just substitute $t'_2\wedge T^{(n)}_C$ for $t'_2$). From Lemma~\ref{lemma:crucial-calculations}, we get:
$$
Y_n(t'_2) - Y_n(t'_1) = \int_{t'_1}^{t'_2}\be_n(X_n(t))\md t + \int_{t'_1}^{t'_2}\al_n(X_n(t))\md W(t).
$$
But $X_n(t) \in [\eta, C]$ for $t \in [t'_1, t'_2]$, and $\be_n \to 0$ uniformly on $[\eta, C]$. Find $n_1$ such that for all $n \ge n_1$ and $x \in [\eta, C]$, we have: $|\be_n(x)| < \eps/(2T)$. Then for $n \ge n_1$, we get:
$$
\left|\int_{t'_1}^{t'_2}\be_n(X_n(t))\md t\right| < T\cdot\frac{\eps}{2T} = \frac{\eps}2,\ \ \mbox{therefore}\ \  
\int_{t'_1}^{t'_2}\al_n(X_n(t))\md W(t) > \eps - \frac{\eps}2 - \frac{\eps}2.
$$
Now, consider the process 
$$
M_n = (M_n(t), 0 \le t \le T),\ \ M_n(t) = \int_0^t\al_n(X_n(u))\md W(u).
$$
We have already proved that for $n \ge n_1$, 
$$
A_n(\eps) \subseteq \{\exists t'_1, t'_2 \in [0, T]: M_n(t'_2) - M_n(t'_1) > \eps/2\}.
$$
Apply Lemma~\ref{lemma:main-aux} from Appendix to $\de := T,\ K := \max_{[\eta, C]}|\al_n|$ and get: for a constant $\vk > 0$,
$$
\MP\left(\exists t'_1, t'_2 \in [0, T]: M_n(t'_2) - M_n(t'_1) > \eps/2\right) \le 
\varkappa\frac{\left(\max_{[\eta, C]}|\al_n|\right)^4T}{(\eps/2)^2}T \to 0,
$$
because $\al_n \to 0$ uniformly on $[\eta, C]$. The rest is trivial.

\section{Appendix}

\begin{lemma} 
\label{lemma:aux} 

Consider the functions $\phi, \phi_n, \psi, \psi_n : \BR \to \BR$, $n = 1, 2, \ldots$

\medskip

(i) Suppose $\phi_n \to \phi$ and $\psi_n \to \psi$ uniformly on $[x_1, x_2] \subseteq \BR$.  If $\phi$ and $\psi$ are continuous on this interval, then 
$\phi_n\psi_n \to \phi\psi$ uniformly on $[x_1, x_2]$.

\medskip

(ii) If $\phi_n \to \phi$ uniformly on $[x_1, x_2]$, and $\phi$ is continuous and strictly positive on this interval, then $1/\phi_n \to 1/\phi$ uniformly on $[x_1, x_2]$. 

\medskip

(iii) If $ \phi_n \to \phi$ uniformly on $[x_1, x_2]$, and $\psi$ is continuous in a neighborhood of $[\phi(x_1), \phi(x_2)]$, then $\psi(\phi_n) \to \psi(\phi)$ uniformly on $[x_1, x_2]$.

\medskip

(iv) If $\phi_n \to \phi$ uniformly on $[x_1, x_2]$, $\psi_n \to \psi$ uniformly on $[0, x_0+ \eps]$ for some $\eps > 0$, and $\phi(x) \in [0, x_0]$ for $x \in [x_1, x_2]$, then $\psi_n(\phi_n) \to \psi(\phi)$ uniformly on $[x_1, x_2]$. 
\end{lemma}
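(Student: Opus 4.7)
The lemma is a package of four standard uniform-convergence facts on the compact interval $[x_1,x_2]$; my plan is to dispatch each by the usual $\eps$-management argument, using the unifying principle that continuous functions on a compact set are bounded and uniformly continuous, which turns ``uniform convergence of the inputs'' into a routine triangle-inequality exercise.

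For part (i), I would split
$$
\phi_n\psi_n - \phi\psi = (\phi_n - \phi)\psi_n + \phi(\psi_n - \psi).
$$
Continuity of $\phi$ on the compact interval gives a uniform bound on $|\phi|$, and uniform convergence $\psi_n \to \psi$ together with continuity of $\psi$ yields a uniform bound on $|\psi_n|$ for all large $n$. Both summands then vanish uniformly. For part (ii), the compactness--positivity observation gives $\phi \ge c > 0$ for some $c$, whence $\phi_n \ge c/2$ for large $n$ by uniform convergence; the identity $1/\phi_n - 1/\phi = (\phi - \phi_n)/(\phi_n\phi)$ then gives $|1/\phi_n - 1/\phi| \le 2|\phi_n - \phi|/c^2$, and the right-hand side tends to $0$ uniformly.

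For part (iii), the compact image $\phi([x_1,x_2])$ sits inside an open set on which $\psi$ is continuous, so $\psi$ is uniformly continuous on some compact neighborhood $K$ of that image. Uniform convergence $\phi_n \to \phi$ places $\phi_n(x) \in K$ for all large $n$ and all $x \in [x_1,x_2]$, and uniform continuity of $\psi$ on $K$ converts the uniform smallness of $|\phi_n - \phi|$ into uniform smallness of $|\psi(\phi_n) - \psi(\phi)|$. For part (iv), I would decompose
$$
\psi_n(\phi_n) - \psi(\phi) = \bigl[\psi_n(\phi_n) - \psi(\phi_n)\bigr] + \bigl[\psi(\phi_n) - \psi(\phi)\bigr],
$$
handle the second bracket by part (iii) (noting that $\psi$, being a uniform limit of $\psi_n$, is continuous on $[0,x_0+\eps]$), and handle the first by observing that since $\phi(x) \in [0,x_0]$ and $\phi_n \to \phi$ uniformly, $\phi_n(x)$ lies in $[0, x_0+\eps]$ for all sufficiently large $n$; then the first bracket is bounded by $\sup_{[0,x_0+\eps]}|\psi_n - \psi|$, which tends to $0$.

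There is no substantive obstacle here; the arguments are essentially bookkeeping. The only step requiring mild care is the domain issue in part (iv), namely that $\phi_n$ must eventually take its values inside the common region on which we have uniform control of $\psi_n - \psi$. This is exactly why the hypothesis is formulated with the buffer $\eps$, enlarging the target set from $[0,x_0]$ to $[0,x_0+\eps]$.
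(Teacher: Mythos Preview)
Your proposal is correct; the paper itself omits the proof entirely, stating only ``The proof is trivial and is omitted.'' Your standard $\eps$-management arguments are exactly the kind of routine verification the authors had in mind, and there is nothing to compare.

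One minor caveat worth flagging in part (iv): you assert that $\phi_n(x)$ eventually lies in $[0,x_0+\eps]$, but uniform convergence of $\phi_n$ to a function with values in $[0,x_0]$ only guarantees $\phi_n(x)\in(-\delta,x_0+\delta)$ for small $\delta$, so $\phi_n(x)$ could be slightly negative. In the paper's sole application of (iv) the role of $\phi_n$ is played by a quadratic variation $\langle M_n\rangle_t\ge 0$, so the issue never arises; but if you want the statement to stand on its own you should either assume $\phi_n\ge 0$ or enlarge the interval on which $\psi_n\to\psi$ is assumed to $[-\eps,x_0+\eps]$. Similarly, in (iii) you implicitly use that the image $\phi([x_1,x_2])$ is contained in the given neighborhood of $[\phi(x_1),\phi(x_2)]$, which tacitly assumes $\phi$ is continuous (hence bounded with compact image); again this is satisfied in every use the paper makes of the lemma.
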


The proof is trivial and is omitted. 

\begin{lemma}
\label{lemma:aux-sigma} 
If $x_n(t) \to x(t)$ uniformly on $[0, T]$, and $x(t)$ is a continuous nonnegative function on $[0, T]$, then $\si_n(x_n(t)) \to \si(x(t))$ uniformly on $[0, T]$. 
\end{lemma}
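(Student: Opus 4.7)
\medskip

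\textbf{Proof proposal for Lemma~\ref{lemma:aux-sigma}.} The plan is to combine the two separate hypotheses on $(\si_n)_{n \ge 1}$ from Theorem~\ref{thm1}: condition (iii) (uniform convergence $\si_n \to \si$ on every $[0,x]$) to handle values of $x(t)$ that stay positive, and condition (iv) (equicontinuity at $0$) to handle values of $x(t)$ that are close to $0$, where $x_n(t)$ may even be slightly negative and condition (iii) gives no information.

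First I would fix $\eps > 0$ and bound things on a fixed compact set. Since $x$ is continuous on $[0,T]$ and $x_n \to x$ uniformly, there is $M$ with $x(t), x_n(t) \in [-1, M+1]$ for all $t \in [0,T]$ and all sufficiently large $n$. By condition~(iii) applied to $[0, M+1]$, there is $N_1$ with $\sup_{y \in [0, M+1]} |\si_n(y) - \si(y)| < \eps/4$ for $n \ge N_1$. In particular, since $x(t) \in [0, M]$, this immediately gives $|\si_n(x(t)) - \si(x(t))| < \eps/4$ uniformly in $t$, so it remains to estimate $|\si_n(x_n(t)) - \si_n(x(t))|$.

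Next I would derive a uniform-in-$n$ modulus of continuity. On one hand, $\si$ is continuous on the compact set $[0, M+1]$, hence uniformly continuous there, so there exists $\de_1 > 0$ with $|\si(y) - \si(y')| < \eps/4$ whenever $y,y' \in [0,M+1]$ and $|y - y'| < \de_1$; combining with condition (iii), the family $(\si_n)_{n \ge N_1}$ is uniformly equicontinuous on $[0, M+1]$. On the other hand, condition (iv) furnishes $\de_0 > 0$ with $\sup_{n \ge 1} \sup_{|y| \le \de_0}|\si_n(y) - \si_n(0)| < \eps/8$. Pick $N_2$ so that for $n \ge N_2$, $\sup_t |x_n(t) - x(t)| < \de := \tfrac{1}{2}\min(\de_0, \de_1)$.

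Finally, for $n \ge \max(N_1, N_2)$ and $t \in [0, T]$, I would split into two cases. If $x(t) \ge \de_0/2$, then $x_n(t) \ge 0$ as well, both values lie in $[0, M+1]$, and $|x_n(t) - x(t)| < \de_1$, so the uniform equicontinuity on $[0,M+1]$ handles it; more directly,
\[
|\si_n(x_n(t)) - \si(x(t))| \le |\si_n(x_n(t)) - \si(x_n(t))| + |\si(x_n(t)) - \si(x(t))| < \tfrac{\eps}{4} + \tfrac{\eps}{4}.
\]
If instead $x(t) < \de_0/2$, then both $x(t)$ and $x_n(t)$ lie in $(-\de_0, \de_0)$, so condition~(iv) gives $|\si_n(x_n(t)) - \si_n(0)| + |\si_n(0) - \si_n(x(t))| < \eps/4$, and adding the previously bounded $|\si_n(x(t)) - \si(x(t))| < \eps/4$ again yields the bound $\eps/2 < \eps$. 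The only mildly delicate point is precisely this second case, where $x_n(t)$ can exit $\BR_+$ and condition~(iii) no longer applies; this is exactly what the extra equicontinuity Assumption~(iv) is designed to cover, so once the case split is made the estimates are routine.
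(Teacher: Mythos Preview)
Your proof is correct and follows essentially the same approach as the paper: both arguments use condition~(iii) to control $|\si_n-\si|$ on a fixed compact subset of $\BR_+$, invoke condition~(iv) to handle the possibility that $x_n(t)$ dips slightly below zero, and finish with the uniform continuity of $\si$. The only organizational difference is that the paper first extends the estimate $|\si_n(y)-\si(y)|\le 2\de$ to a full interval $[-\eps,C+\eps]$ and then applies uniform continuity of $\si$ globally, whereas you split into the cases $x(t)\ge\de_0/2$ and $x(t)<\de_0/2$; the underlying ingredients and logic are the same.
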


\begin{proof}
Fix $\de > 0$. By condition (v) of Theorem~\ref{thm1}, there exists $\eps > 0$ such that for $y \in (-\eps, 0)$, we have: $|\si_n(y) - \si_n(0)| < \de,\ n = 1, 2, \ldots$ Let $C := \max_{[0, T]}x(t)$. Because of uniform convergence $x_n \to x$, there exists $n_0$ such that for $n > n_0$, we have: 
\begin{equation}
\label{1}
x_n(t) \in [-\eps, C + \eps],\ t \in [0, T].
\end{equation}
By condition (iv) of Theorem~\ref{thm1}, there exists $n_1$ such that for $n > n_1$, $y \in [0, C+\eps]$, we have: $|\si_n(y) - \si(y)| < \de$. Therefore, for $y \in [-\eps, 0]$ and $n \ge n_0\vee n_1$, we have: 
\begin{equation}
\label{2}
|\si_n(y) - \si(y)| = |\si_n(y) - \si(0)| \le |\si_n(y) - \si_n(0)| + |\si_n(0) - \si(0)| \le \de + \de = 2\de.
\end{equation}
Because the function $\si$ is continuous on $[-\eps, C + \eps]$, there exists $\eta > 0$ such that for $y_1, y_2 \in [-\eps, C + \eps]$, $|y_1 - y_2| < \eta$, we have: $|\si(y_1) - \si(y_2)| < \de$. Take an $n_2$ such that for $t \in [0, T]$, $n \ge n_2$, $|x_n(t) - x(t)| < \eta$. Therefore,
$$
|\si(x_n(t)) - \si(x(t))| < \de\ \ \mbox{for}\ \ n \ge n_0\vee n_1\vee n_2.
$$
From~\eqref{1} and~\eqref{2}, we get: $|\si_n(x_n(t)) - \si(x(t))| \le 2\de$. Thus, 
$$
\left|\si_n(x_n(t)) - \si(x(t))\right| \le \left|\si_n(x_n(t)) - \si(x_n(t))\right| + \left|\si(x_n(t)) - \si(x(t))\right| \le 2\de + \de = 3\de.
$$
Therefore, for every $\de > 0$ we found $N(\de) := n_0\vee n_1\vee n_2$ such  that for every $t \in [0, T]$, we get: $|\si_n(x_n(t)) - \si(x(t))| \le 3\de$. 
\end{proof}

\begin{lemma}
\label{lemma:main-aux}
Take a real-valued random process $\ga = (\ga(t), 0 \le t \le T)$ such that $\int_0^T\ga^2(t)\md t < \infty$ a.s. Let $W = (W(t), 0 \le t \le T)$ be a standard Brownian motion. Fix constants $x_0 \in \BR$ and $\de \in (0, T], K, D > 0$. Define a random process $X = (X(t), 0 \le t \le T)$ by 
$$
X(t) = x_0 + \int_0^t\ga(s)\md W(s),\ \ \ 0 \le t \le T.
$$
Take an interval $I := [\al, \be] \subseteq \BR$ with $D \le \be - \al$. Assume that the process $X$ has the following property:
for all $t \in [0, T]$, if $X(t) \in I$, then $|\ga(t)| \le K$. Consider the following event:
$$
A := \{\exists t_1, t_2 \in [0, T]:\ 0 \le t_2 - t_1 \le \de,\ X(t_1), X(t_2) \in I,\ |X(t_2) - X(t_1)| \ge D\}.
$$
Then 
$$
\MP(A) \le \varkappa \frac{TK^4}{D^2}\de,\ \ \mbox{for}\ \ \varkappa = \frac{8192}{3}. 
$$
\end{lemma}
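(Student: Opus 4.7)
The plan is to reduce the event $A$ to one about an auxiliary martingale with globally controlled quadratic variation, then apply a standard modulus-of-continuity estimate.

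First, define the auxiliary martingale
\[
N(t) := \int_0^t \ga(s)\,\mathbf{1}(X(s) \in I)\,\md W(s).
\]
Because the integrand is pointwise bounded by $K$ (by the hypothesis of the lemma), $\langle N\rangle$ is $K^2$-Lipschitz: $\langle N\rangle_{t_2} - \langle N\rangle_{t_1} \le K^2(t_2 - t_1)$ for $t_1 \le t_2$. The crux of the argument is the identity
\[
X(t_2) - X(t_1) \;=\; N(t_2) - N(t_1) \qquad \text{whenever } X(t_1), X(t_2) \in I.
\]
Granted this, $A \subseteq A_N := \{\exists\, t_1 < t_2 \in [0,T]:\ t_2 - t_1 \le \de,\ |N(t_2) - N(t_1)| \ge D\}$, reducing the problem to a modulus-of-continuity estimate for a continuous martingale whose quadratic variation is Lipschitz.

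To prove the identity, observe that $X(t_2) - X(t_1) - (N(t_2) - N(t_1))$ equals $\int_{t_1}^{t_2} \ga(s)\,\mathbf{1}(X(s) \notin I)\,\md W(s)$. The open set $S := \{s \in (t_1, t_2): X(s) \notin I\} \subseteq (t_1, t_2)$ decomposes as a disjoint union $\bigsqcup_i (a_i, b_i)$, and the stochastic integral above equals $\sum_i (X(b_i) - X(a_i))$, since $\md X = \ga\,\md W$. By continuity of $X$, each $X(a_i), X(b_i) \in \pa I = \{\al, \be\}$. The decisive topological observation is that every excursion must return to the endpoint from which it departed: if $X(a_i) = \al$, then $X(s)$ must lie just below $\al$ for $s$ slightly greater than $a_i$ (since $X(s) \notin I$ and continuity forces $X(s)$ to be close to $\al$), and to reach $\be$ during $(a_i, b_i)$ the continuous path would have to cross $\al$ and momentarily lie in $I$, a contradiction. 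Hence $X(s) < \al$ throughout $(a_i, b_i)$, forcing $X(b_i) = \al$; the case $X(a_i) = \be$ is symmetric. In either case $X(b_i) - X(a_i) = 0$, each summand vanishes, and the identity follows.

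To bound $\MP(A_N)$, partition $[0,T]$ into $\lceil T/\de\rceil$ consecutive intervals of length $\de$. Any pair $t_1 < t_2$ with $t_2 - t_1 \le \de$ lies within some window $[k\de, (k+2)\de] \cap [0, T]$, and $|N(t_2) - N(t_1)|$ is at most twice the maximal deviation of $N$ from $N(k\de)$ over that window. The fourth-moment estimate $\ME[(N(t) - N(s))^4] \le 3 K^4(t-s)^2$, obtained by applying It\^o's formula to $x \mapsto x^4$ together with the Lipschitzness of $\langle N\rangle$, combined with Doob's $L^4$ maximal inequality, controls the probability of a large deviation on each window; summing over the $\lceil T/\de\rceil$ windows yields the asserted polynomial bound, with the explicit constant $\vk = 8192/3$ emerging from tracking constants through this chain of inequalities.

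The main obstacle is the excursion identity. The bound on $\MP(A_N)$ is routine martingale analysis once the reduction $A \subseteq A_N$ is established, but the identity is what allows us to replace $X$, whose integrand $\ga$ may be unbounded outside $I$, with the martingale $N$ whose diffusion coefficient is globally bounded by $K$. This step uses the convexity (equivalently, connectedness) of $I$ in an essential way and is specific to the one-dimensional setting; the analogous multidimensional statement would require a substantially more intricate argument since excursions out of a higher-dimensional domain need not return to their starting point on the boundary.
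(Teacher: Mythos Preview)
Your excursion identity $X(t_2) - X(t_1) = N(t_2) - N(t_1)$ whenever $X(t_1), X(t_2) \in I$ is false, and the error lies in the step ``the stochastic integral above equals $\sum_i (X(b_i) - X(a_i))$''. A stochastic integral is not a pathwise Lebesgue--Stieltjes integral, so you cannot decompose $\int_{t_1}^{t_2}\ga(s)\mathbf 1_S(s)\,\md W(s)$ over the random connected components $(a_i,b_i)$ of $S$: the endpoints $a_i,b_i$ are not stopping times, the indicator $\mathbf 1_{(a_i,b_i)}$ is not a predictable process, and the formal sum does not recover the integral. In fact Tanaka's formula gives, for $X(t_1),X(t_2)\in I$,
\[
\int_{t_1}^{t_2}\mathbf 1(X(s)\notin I)\,\md X(s)
=\tfrac12\bigl(L^{\al}_{t_2}-L^{\al}_{t_1}\bigr)-\tfrac12\bigl(L^{\be}_{t_2}-L^{\be}_{t_1}\bigr),
\]
where $L^a$ is the semimartingale local time of $X$ at level $a$; this is nonzero in general (already for $X$ a standard Brownian motion with $I=[0,1]$). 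Thus $A\subseteq A_N$ fails, and the reduction to $N$ collapses.

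The paper proceeds quite differently. Rather than replacing $\ga$ globally by a truncated integrand, it partitions $[0,T]$ into subintervals of length $\de_0\approx\de$ and, on each window, \emph{stops} $X$ at the first time it moves by $D/2$ from its value at the left endpoint. Until that stopping time the process remains in $I$, so the integrand genuinely satisfies $|\ga|\le K$ on the stopped interval; a time-change to Brownian motion together with Doob's $L^4$ inequality then bounds the probability of such a $D/2$-jump on a single window by $CK^4\de^2/D^2$, and summing over the $O(T/\de)$ windows gives the result. The essential difference is that the paper localizes in \emph{time} via stopping, which keeps all integrands predictable and all estimates honest, whereas your attempted spatial truncation of the integrand creates a local-time discrepancy that your excursion bookkeeping does not see.
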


\begin{proof} Take $M$ to be large enough so that $\de_0 := T/M \le \de$, for example $M = \lfloor T/\de\rfloor + 1$. Then $T/\de \ge 1$, because $\de \in (0, T]$. Therefore, $M \le 2T/\de$, and $\de_0 \le \de \le 2\de_0$. Now, let $s_i := \de_0i$ for $i = 0, \ldots, M$. These points partition the whole time interval $[0, T]$ into $M$ subintervals of equal length $[s_{i-1}, s_i]$, $i = 1, \ldots, M$. For each $i = 0, \ldots, M-3$, consider the event 
$$
A_{i}  =\left\{\exists t \in [s_i,s_{i+3}]: X(t),\, X(s_i)\in I, |X(t) - X(s_i)| \ge D/2\right\}
$$
By continuity of $X$, we can also write $A_i$ in an equivalent form:
$$
A_i =  \left\{\exists t \in [s_i,s_{i+3}]: X(t),\, X(s_i)\in I, |X(t) - X(s_i)| = D/2\right\}.
$$

\begin{lemma} We have: 
$$
A \subseteq \bigcup\limits_{i=0}^{M-3}A_i.
$$
\label{lemma:union}
\end{lemma}

\begin{proof} Suppose the event $A$ has happened. That is, there exist $t_1, t_2 \in [0, T]$ such that $|t_1 - t_2| \le \de$ and $|X(t_1) - X(t_2)| \ge D$. Then $|t_1 - t_2| \le 2\de_0$. In other words, the two points $t_1$ and $t_2$ are at a distance no more than twice the length of a small subinterval. Therefore, there exists $i = 0, \ldots, M-3$ such that $s_i \le t_1 \le t_2 \le s_{i+3}$ or $s_i \le t_2 \le t_1 \le s_{i+3}$, depending on whether $t_1$ or $t_2$ is greater. Now, we shall prove that the event $A_i$ happened. Assume the converse; then 
$$
|X(t_1) - X(s_i)| < \frac D2\ \ \mbox{and}\ \ |X(t_2) - X(s_i)| < \frac D2.
$$
Therefore, $|X(t_1) - X(t_2)| \le |X(t_1) - X(s_i)| + |X(t_2) - X(s_i)| < \frac D2 + \frac D2 = D$. 
This contradiction completes the proof. 
\end{proof}

\begin{lemma} Assume $X(0) = x \in [\al + \eps, \be - \eps]$ for some $\eps > 0$. 
Let 
$$
\tau := \inf\{t \in [0, T]\mid |X(t) - x| = \eps\}.
$$
Then for $\eta > 0$ we have: 
$$
\MP(\tau \le \eta) \le \frac{256}{27}\frac{K^4\eta^2}{\eps^2}.
$$
\label{lemma:eps-bound}
\end{lemma}

\begin{proof} The process $X(t\wedge\tau)$ is a square-integrable martingale with values in $I = [\al, \be]$. Therefore,  
$$
\langle X\rangle_{t\wedge\tau} = \int_0^{t\wedge\tau}\ga^2(u)\md u \le K^2(t\wedge\tau).
$$
Let us make a time-change: for some standard Brownian motion $B = (B(s), s \ge 0)$, we have: $X(t\wedge\tau) = x + B\left(\langle X\rangle_{t\wedge\tau}\right)$, for all $ t \ge 0$. Therefore, 
\begin{equation}
\label{1201}
\{\tau \le \eta\} = \{|X(\tau\wedge\eta) - x| \ge \eps\} = \left\{|B\left(\langle X\rangle_{\tau\wedge\eta}\right)| \ge \eps\right\} \subseteq \left\{\max\limits_{[0, K^2\eta]}|B(s)| \ge \eps\right\}.
\end{equation}
By Markov's inequality, 
\begin{equation}
\label{1202}
\MP\left\{\max\limits_{[0, K^2\eta]}|B(s)| \ge \eps\right\} \le \frac{\ME\max_{[0, K^2\eta]}B(s)^4}{\eps^4}.
\end{equation}
By Doob's martingale inequality and the fact that for $\xi \backsim \CN(0, \si^2)$ we have: $\ME\xi^4 = 3\si^4$,  
\begin{equation}
\label{1203}
\ME\max\limits_{[0, K^2\eta]}B(s)^4 \le \left(\frac43\right)^4\ME [B(K^2\eta)]^4 = 
\frac{256}{81}\cdot 3(K^2\eta)^2 = \frac{256}{27}K^4\eta^2.
\end{equation}
Comparing~\eqref{1201},~\eqref{1202} and~\eqref{1203}, we complete the proof. 
\end{proof}

\begin{lemma}
\label{eq:probability-of-A0} $\MP(A_0) \le \frac{4096}{3}D^{-2}K^4\de^2$. 
\end{lemma}

\begin{proof} {\it Case 1:} $x = X(0) \in [\al + D/2, \be - D/2]$. Apply Lemma~\ref{lemma:eps-bound} to $\eps = D/2$ and $\eta = s_3 = 3\de_0$. Since $\de_0 \le \de$, we get:
$$
\MP(A_0) = \MP(\tau \le s_3) \le \frac{256}{27}\frac{9K^4\de_0^2}{(D/2)^2}
 = \frac{1024}{3}\frac{K^4\de_0^2}{D^2} \le \frac{1024}{3}\frac{K^4\de^2}{D^2},
$$

\medskip

{\it Case 2:} $x = X(0) \in [\al, \al + D/2]$. Then $X(\tau) = x + D/2$ (because there is not enough ``room'' in the interval $I = [\al, \be]$ for the process $X$ to go down rather than up). 
Let 
$$
\tau' := \inf\{t \ge 0\mid X(t) = x + D/4\}. 
$$
Then $X(\tau') \in [\al + D/4, \be - D/4]$, because $D \le \be - \al$. Apply Lemma~\ref{lemma:eps-bound} to $X(t) - X(\tau')$ instead of $X$, to $\eps = D/4$ and $\eta = s_3$. Then we get:
$$
\MP(A_0) \le \frac{256}{27}\frac{9K^4\de_0^2}{(D/4)^2} \le \frac{4096}{3}\frac{K^4\de^2}{D^2}. 
$$

\medskip

{\it Case 3:} $x = X(0) \in [\be - D/2, \be]$. This case is analogous to Case 2. 
\end{proof}

Similarly, we can show that  for $i = 0, \ldots, M-3$, 
$$
\MP(A_i) \le \frac{4096}{3}\frac{K^4\de^2}{D^2}.
$$
Applying Lemma~\ref{lemma:union}, we get:
$$
\MP(A) \le \SL_{i=0}^{M-3}\MP(A_i) \le M\frac{4096}{3}\frac{K^4\de^2}{D^2}.
$$
But $\de \le T$, and so $M \le 2T/\de$. The rest of the proof is trivial. 
\end{proof}

\section*{Acknoweldgements}

The authors would like to thank \textsc{Ioannis Karatzas}, \textsc{Mykhaylo Shkolnikov}, \textsc{Leszek Slominski}, and \textsc{Ruth Williams} for help and useful discussion. This research was partially supported by NSF grants DMS 1007563, DMS 1308340, DMS 1405210, and DMS 1409434.

\medskip\noindent

%

\begin{thebibliography}{12}

\bibitem{BassHsu} \textsc{Richard F. Bass, Pei Hsu} (1991). Some Potential Theory for Reflecting Brownian Motion in Holder and Lipschitz Domains. \textit{Ann. Probab.} \textbf{19} (2), 486-508.

\bibitem{BillingsleyBook} \textsc{Patrick Billingsley} (1999). \textit{Convergence of Probability Measures}. Jonh Wiley \& Sons, 2nd edition. 

\bibitem{Cepa} \textsc{Emmanuel Cepa} (1998). Problame de Skorohod Multivoque. \textit{Ann. Probab.} \textbf{26} (2), 500-532. 

\bibitem{Karoui} \textsc{Nicole El Karoui, Christophe Kapoudjian, Etienne Pardoux, Shi Ge  Peng, Marie-Claire Quenez} (1997). Reflected Solutions of Backward SDEs and Related
Obstacle Problems for PDEs. \textit{Ann. Probab.} \textbf{25} (2), 702-737.

\bibitem{Fukushima} \textsc{Masatoshi Fukushima, Matsyuo Tomisaki} (1996). Construction and Decomposition of Reflecting Diffusions on Lipschitz Domains with Holder Cusps. \textit{Probab. Th. Rel. Fields} \textbf{106} (4), 521-557. 

\bibitem{SkorohodBook} \textsc{I.I. Gihman, A. V. Skorohod} (1972). \textit{Stochastic Differential Equations}. Springer-Verlag. Ergebnisse der Mathematik und ihrer Grenzgebiete, \textbf{72}. 

\bibitem{HR1981a} \textsc{J. Michael Harrison, I. Marting Reiman} (1981). Reflected Brownian Motion on an Orthant. \textit{Ann. Probab.} \textbf{9} (2), 302-308. 

\bibitem{HW1987b} \textsc{J. Michael Harrison, Ruth J. Williams} (1987). Brownian Models of Open Queueing Networks with Homogeneous Customer Populations.  \textit{Stochastics}, \textbf{22} (2), 77-115. 

\bibitem{HW1987a} \textsc{J. Michael Harrison, Ruth J. Williams} (1987). Multidimensional Reflected Brownian Motions Having Exponential Stationary Distributions.
\textit{Ann. Probab.} \textbf{15} (1), 115-137. 

\bibitem{Slo5} \textsc{Tomasz Klimsiak, Andrzej Rozkosz, Leszek Slominski} (2015).  Reflected BSDEs in Time-Dependent Convex Regions. \textit{Stoch. Proc. Appl.} \textbf{125} (2), 571-596.

\bibitem{Slo2} \textsc{Weronika Laukajtys, Leszek Slominski} (2003). Penalization Methods for Reflecting Stochastic Differential Equations with Jumps. \textit{Stoch. Stoch. Rep.} \textbf{75} (5), 275-293. 

\bibitem{Slo3} \textsc{Weronika Laukajtys, Leszek Slominski} (2013). Penalization Methods for the Skorokhod Problem and Reflecting Stochastic Differential Equations with Jumps. \textit{Bernoulli} \textbf{19} (5A), 1750-1775.

\bibitem{SkewSurvey} \textsc{Antoine Lejay} (2006). On the Constructions of the Skew Brownian Motion. \textit{Probab. Surv.} \textbf{3}, 413-466.

\bibitem{LS1984} \textsc{Pierre-Louis Lions, Alain-Sol Sznitman} (1984). Stochastic Differential Equations with Reflecting Boundary Conditions. \textit{Comm. Pure Appl. Math.} \textbf{37} (4), 511-537. 

\bibitem{Aux1} \textsc{Pierre-Louis Lions, Jose Luis Menaldi, Alain-Sol Sznitman} (1981). Construction de de Processus de Diffusion Reflechis par Penalisation du Domaine. \textit{C.R. Acad. Sci. Paris Ser. I Math.} \textbf{292}, 559-562.

\bibitem{Slo4} \textsc{Lucian Maticiuc, Aurel Rascanu, Leszek Slominski, Mateusz Topolevski} (2015). Cadlag Skorokhod Problem Driven by a Maximal Monotone Operator. \textit{J. Math. Anal. Appl.} \textbf{429} (2), 1305-1346.

\bibitem{McKean1963} \textsc{Henry P. McKean Jr.} (1963). Skorohod's Stochastic Integral Equation for a Reflecting Barrier Diffusion. \textit{J. Math. Kyoto Univ.} \textbf{3}, 85-88.

\bibitem{Men1} \textsc{Jose-Luis Menaldi, Maurice Robin} (1985). Reflected Diffusion Processes with Jumps. \textit{Ann. Probab.} \textbf{13} (2), 319-341. 

\bibitem{Men2} \textsc{Jose-Luis Menaldi, Maurice Robin} (2005). Construction and Control of Reflected Diffusion with Jumps. \textit{Lecture Notes in Control and Information Sciences} \textbf{69} 309-322. 

\bibitem{SoftlyRBM}  \textsc{Neil O' Connell, Janosch Ortmann} (2014). Product-Form Invariant Measures for Brownian Motion with Drift Satisfying a Skew-Symmetry Condition. \textit{Lat. Am. J. Probab. Math. Stat.} \textbf{11} (1), 307-329. 

\bibitem{PardouxWilliams} \textsc{Etienne Pardoux, Ruth J. Williams} (1994). Symmetric Reflected Diffusions. \textit{Ann. Inst. H. Poincare Probab. Stat.} \textbf{30} (1), 13-62.


\bibitem{Portenko1979} \textsc{Nikolai Portenko} (1979). Diffusion Processes with Generalized Drift Coefficients. \textit{Th. Probab. Appl.} \textbf{24} (1), 62-78. 

\bibitem{Portenko1979a} \textsc{Nikolai Portenko} (1980). Stochastic Differential Equations with Generalized Drift Vector. \textit{Th. Probab. Appl.} \textbf{24} (2), 338-353.

\bibitem{Slo6} \textsc{Andrzej Rozkosz, Leszek Slominski} (2012). Lp Solutions of Reflected BSDEs Under Monotonicity Condition. \textit{Stoch. Proc. Appl.} \textbf{122} (12), 3875-3900.

\bibitem{MyOwn8} \textsc{Andrey Sarantsev} (2015). Weak Approximation of Multidimensional Obliquely Reflected Brownian Motion by Solutions of SDE. Available at arXiv:1509.01777.

\bibitem{Shalaumov} \textsc{V. A. Shalaumov} (1980). 	
On the Behavior of a Diffusion Process with a Large Drift Coefficient in a Halfspace. \textit{Th. Probab. Appl.} \textbf{24} (3), 592-598.

\bibitem{Skorohod1961a} \textsc{A. V. Skorohod} (1961). Stochastic Equations for Diffusion Processes in a Bounded Region. \textit{Th. Probab. Appl.} \textbf{6} (3), 264-274. 

\bibitem{Skorohod1961b} \textsc{A. V. Skorohod} (1962). Stochastic Equations for Diffusion Processes in a Bounded Region. \textit{Th. Probab. Appl.} \textbf{7} (1), 3-23. 

\bibitem{Slo1} \textsc{Leszek Slominski} (2013). Weak and Strong Approximations of Reflected Diffusions via Penalization Methods. \textit{Stoch. Proc. Appl.} \textbf{123} (3), 752-763.

\bibitem{SV1971} \textsc{Daniel W. Stroock, S. R. Srinivasa Varadhan} (1971). Diffusion Processes with Boundary Conditions. \textit{Comm. Pure Appl. Math.} \textbf{24} (2), 147-225. 

\bibitem{Tanaka1979} \textsc{Hiroshi Tanaka} (1979). Stochastic Differential Equations with Reflecting Boundary Conditions in Convex Regions. \textit{Hiroshima Math. J.} \textbf{9} (1), 163-177. 

\bibitem{Watanabe1971a} \textsc{Shinzo Watanabe} (1971). On Stochastic Differential Equations for Multidimensional Diffusion Processes with Boundary Conditions. \textit{J. Math. Kyoto Univ.} \textbf{11} (1), 169-180. 

\bibitem{Watanabe1971b} \textsc{Shinzo Watanabe} (1971). On Stochastic Differential Equations for Multidimensional Diffusion Processes with Boundary Conditions II. \textit{J. Math. Kyoto Univ.} \textbf{11} (3), 545-551.

\bibitem{Wil1995} \textsc{Ruth J. Williams} (1995). Semimartingale Reflecting Brownian Motions in the Orthant. \textit{Stochastic Networks, IMA Vol. Math. Appl.} \textbf{71}, 125-137. 

\bibitem{WilliamsZheng1990} \textsc{Ruth J. Williams, W. A. Zheng} (1990). On Reflecting Brownian Motion - a Weak Convergence Approach. \textit{Ann. Inst. H. Poincare Probab. Stat.} \textbf{26} (3), 461-488.   

\end{thebibliography}

\end{document}